 \def\misajour{16/09/2013} 
 %


 
\documentclass[10pt]{article} 
 
\usepackage[applemac]{inputenc}
\usepackage[pdftex]{graphicx}

\usepackage{amsmath} 

\usepackage{amsthm} 

\usepackage[applemac]{inputenc}
\usepackage[pdftex]{graphicx} 
\usepackage{hyperref} 
 
\def\borne{M}
\def\C{\mathbf {C}}
\def\Q{\mathbf{ Q}}
\def\R{\mathbf {R}}
\def\Z{\mathbf {Z}}
\def\ZK{\Z_K}

\def\ZKtimes{\ZK^\times}

\def\Card{{\mathrm{Card}}}

\def\thetanu{\nu}

\def\ulambda{\underline{\lambda}} 

\def\Atilde{\tilde{A}}
\def\Btilde{\tilde{B}}
\def\Dtilde{\tilde{D}}

\def\house#1{\setbox1=\hbox{$\,#1\,$}%
\dimen1=\ht1 \advance\dimen1 by 2pt \dimen2=\dp1 \advance\dimen2 by 2pt
\setbox1=\hbox{\vrule height\dimen1 depth\dimen2\box1\vrule}%
\setbox1=\vbox{\hrule\box1}%
\advance\dimen1 by .4pt \ht1=\dimen1
\advance\dimen2 by .4pt \dp1=\dimen2 \box1\relax}

\def\rmN{\mathrm{N}}
\def\calH{{\mathcal{H}}}
\def\calE{{\mathcal{E}}}
\def\calEtilde{\tilde{\calE}}
\def\calF{{\mathcal{F}}}
\def\rmM{\mathrm{M}}
\def\rmh{\mathrm{h}}
\def\tors{{\mathrm{tors}}}
\newtheorem{conjecture}
{\indent Conjecture}
\newtheorem{theoreme}
{\indent Theorem}
\newtheorem{lemme}
{\indent Lemma}
\newtheorem{proposition}
{\indent Proposition}
\newtheorem{corollaire}
{\indent Corollary}
\newtheorem*{remarque}{\indent Remark}

\newcounter{compteurkappa} 

\def\Newcst#1{
\refstepcounter{compteurkappa}
\kappa_{ 
\arabic{compteurkappa}}
\label{#1}
}

\def\cst#1{\kappa_{\ref{#1}}}

\def\boxit#1#2{\setbox1=\hbox{\kern#1{#2}\kern#1}%
\dimen1=\ht1 \advance\dimen1 by #1 \dimen2=\dp1 \advance\dimen2 by #1
\setbox1=\hbox{\vrule height\dimen1 depth\dimen2\box1\vrule}%
\setbox1=\vbox{\hrule\box1\hrule}%
\advance\dimen1 by .4pt \ht1=\dimen1
\advance\dimen2 by .4pt \dp1=\dimen2 \box1\relax}

 \begin{document}
 
 \hfill 
 
 \null
 \vskip -3 true cm

 \hfill
 {\it \misajour}
 
 \bigskip 

\begin{center}

{\Large
\bf 
 Solving 
 effectively some families
 \\
 of Thue Diophantine equations 

\bigskip
\it Claude Levesque \rm and \it Michel Waldschmidt
}
\end{center}

\section*{Abstract} Let $\alpha$ be an algebraic number of degree $d\ge 3$ and let $K$ be the algebraic number field $\Q(\alpha)$.
When $\varepsilon$ is a unit of $K$ such that $\Q(\alpha\varepsilon)=K$, we consider the irreducible polynomial $f_\varepsilon(X) \in \Z[X]$ such that 
 $f_\varepsilon(\alpha\varepsilon)=0$. Let $F_\varepsilon(X,Y)$ be the irrreducible binary form of degree $d$ associated to $f_{\varepsilon}(X) $
under the condition $F_{\varepsilon}(X,1)=f_{\varepsilon}(X)$. For each positive integer $m$, we want to exhibit an effective upper bound for the solutions $(x,y,\varepsilon)$ of the diophantine inequation $|F_\varepsilon(x,y)|\le m$. We achieve this goal by restricting ourselves to a subset of units $\varepsilon$ 
which we prove to be sufficiently large as soon as the degree of $K$ is $\geq 4$. 

\medskip
\noindent
{\small AMS Classification: Primary 11D61 
		Secondary 11D41, 11D59}
 
\section{The conjecture and the main result }

Let $\alpha$ be an algebraic number of degree $d\ge 3$ over $\Q$. We denote by $K$ the algebraic number field $\Q(\alpha)$, by $f\in \Z[X]$ the irreducible polynomial of $\alpha$ over $\Z$, by $ \ZKtimes$ the group of units of $K$ and by $r$ the rank of the abelian group $\ZKtimes$. For any unit $\varepsilon\in \ZKtimes$ such that the degree $\delta=[\Q(\alpha\varepsilon):\Q]$ be $\geq 3$, we denote by $f_{\varepsilon}(X)\in \Z[X]$ the irreductible polynomial of $\alpha\varepsilon$ over $\Z$ (uniquely defined upon requiring that the leading coefficient be $>0$) and by $F_{\varepsilon}$ the irreductible binary form defined by 
 $F_{\varepsilon}(X,Y) = Y^\delta f_\varepsilon(X/Y)\in\Z[X,Y]$. \\

The purpose of this paper is to investigate the following conjecture.

\begin{conjecture}
\label{ConjecturePrincipale}
There exists an effectively computable constant $\Newcst{kappa1}>0$, depending only upon $\alpha$, such that, for any $m\ge 2$, each solution $(x,y,\varepsilon)\in\Z^2\times\ZKtimes$ of the inequation $| F_\varepsilon(x,y)|\le m$ with $xy\neq 0$ and $[\Q(\alpha\varepsilon) : \Q]\ge 3$ verifies
$$
 \max\{|x|,\; |y|,\; e^{\rmh(\alpha\varepsilon)}\}\le m^{\cst{kappa1}}.
$$
\end{conjecture}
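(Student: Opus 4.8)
The plan is to carry out the standard attack on Thue equations via Baker's method --- Siegel's identity together with a lower bound for linear forms in logarithms --- organised so that every constant depends only on $\alpha$, the only price for letting $\varepsilon$ vary being that the height of one algebraic number appearing in the linear form grows like $\rmh(\alpha\varepsilon)+\log m$ instead of staying bounded.

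\textbf{Set-up and separation of the unit.} After replacing $\alpha$ by $\lambda\alpha$ with a suitable $\lambda\in\Z\setminus\{0\}$ (this changes $\cst{kappa1}$ only through $\alpha$) we may assume $\alpha\in\ZK$, so that $\beta:=\alpha\varepsilon\in\ZK$, $f_\varepsilon$ is monic, and $|\rmN_{K/\Q}(\beta)|=|\rmN_{K/\Q}(\alpha)|$ is a bounded non-zero integer. If $\Q(\beta)$ is a proper subfield $K'$ of $K$, then $|\rmN_{K'/\Q}(\beta)|$ is still bounded in terms of $\alpha$, and since $K$ has only finitely many subfields it is enough to treat the case $\Q(\beta)=K$, which I now assume. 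Let $\beta^{(1)},\dots,\beta^{(d)}$ be the conjugates of $\beta$; for a putative solution $(x,y,\varepsilon)$ with $xy\ne0$ set $\ell_i=x-\beta^{(i)}y$, which is non-zero and satisfies $\prod_{i=1}^d|\ell_i|=|\rmN_{K/\Q}(x-\beta y)|\le c_1 m$. The ideal $(x-\beta y)$ is principal of norm at most $c_1 m$; using Minkowski's theorem together with the finiteness of the class number and of the regulator of $K$, one chooses a generator $\mu$ of it with $\rmh(\mu)\le c_2(1+\log m)$, so that $x-\beta y=\mu\eta$ with $\eta\in\ZKtimes$ and $\ell_i=\sigma_i(\mu)\sigma_i(\eta)$. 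Fixing a fundamental system of units $\eta_1,\dots,\eta_r$ of $K$ and writing $\eta=\zeta\prod_{l=1}^r\eta_l^{b_l}$ (with $\zeta$ a root of unity), one has $\max_l|b_l|\le c_3\,\rmh(\eta)\le c_4(\rmh(\alpha\varepsilon)+\log m)$ and, conversely, $\rmh(\alpha\varepsilon)\le c_5(\max_l|b_l|+\log m)$.

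\textbf{The linear form, and Baker's bound.} Let $i_0$ be an index with $|\ell_{i_0}|=\min_i|\ell_i|$, and pick $i,j$ in $\{1,\dots,d\}\setminus\{i_0\}$ so that $|\beta^{(i_0)}-\beta^{(i)}|$ is least and $|\beta^{(i_0)}-\beta^{(j)}|$ greatest. Siegel's identity
$$
(\beta^{(j)}-\beta^{(i_0)})\ell_i+(\beta^{(i_0)}-\beta^{(i)})\ell_j+(\beta^{(i)}-\beta^{(j)})\ell_{i_0}=0
$$
shows, after dividing by $(\beta^{(i_0)}-\beta^{(i)})\ell_j$, that
$$
z:=\frac{(\beta^{(i_0)}-\beta^{(j)})\,\ell_i}{(\beta^{(i_0)}-\beta^{(i)})\,\ell_j}
\qquad\text{satisfies}\qquad
z-1=-\,\frac{(\beta^{(j)}-\beta^{(i)})\,\ell_{i_0}}{(\beta^{(i_0)}-\beta^{(i)})\,\ell_j}\ne0 .
$$
Replacing $\ell_t$ by $\sigma_t(\mu)\sigma_t(\eta)$ and expanding $\sigma_t(\eta)$ in the fundamental system gives
$$
z=\gamma_0\prod_{l=1}^r\gamma_l^{\,b_l},\qquad
\gamma_l=\frac{\sigma_i(\eta_l)}{\sigma_j(\eta_l)}\ \ (1\le l\le r),\qquad
\gamma_0=\frac{(\beta^{(i_0)}-\beta^{(j)})\,\sigma_i(\mu)\,\sigma_i(\zeta)}{(\beta^{(i_0)}-\beta^{(i)})\,\sigma_j(\mu)\,\sigma_j(\zeta)} ,
$$
where the $\gamma_l$ are \emph{fixed} algebraic numbers (of height $\le c_6$, lying in the Galois closure of $K$, whose invariants again depend only on $\alpha$) while $\rmh(\gamma_0)\le c_7(\rmh(\alpha\varepsilon)+\log m)$, since $\rmh(\beta^{(i_0)}-\beta^{(j)})\le 2\,\rmh(\alpha\varepsilon)+\log2$ and $\rmh(\mu)\le c_2(1+\log m)$. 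On one hand, the usual gap argument --- $|\ell_{i_0}|\le c_1 m/\prod_{t\ne i_0}|\ell_t|$, $|\ell_t|\ge\frac12|\beta^{(i_0)}-\beta^{(t)}|\,|y|$ for $t\ne i_0$, and every conjugate difference lying between $e^{-c_8\,\rmh(\alpha\varepsilon)}$ (from $|\mathrm{disc}\,f_\varepsilon|\ge1$ and $\max_i|\beta^{(i)}|\le e^{d\,\rmh(\alpha\varepsilon)}$) and $2e^{d\,\rmh(\alpha\varepsilon)}$ --- yields, for the chosen $i,j$,
$$
0<|z-1|\le\exp\bigl(c_9(\rmh(\alpha\varepsilon)+\log m)-d\log|y|\bigr),
$$
provided $\max(|x|,|y|)$ exceeds a bound depending only on $\alpha$ (otherwise the Theorem holds trivially); one also has $\log|x|\le\log|y|+c_{10}\,\rmh(\alpha\varepsilon)$. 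On the other hand, Matveev's lower bound for $|\gamma_0\prod_l\gamma_l^{b_l}-1|$, applied with the numbers $\gamma_0,\dots,\gamma_r$ (only $\gamma_0$ having non-constant height, $\le c_7(\rmh(\alpha\varepsilon)+\log m)$) and the integer exponents $1,b_1,\dots,b_r$ (so $\max_l|b_l|\le c_4(\rmh(\alpha\varepsilon)+\log m)$), gives
$$
\log|z-1|\ \ge\ -\,c_{11}\bigl(\rmh(\alpha\varepsilon)+\log m\bigr)\log\bigl(\rmh(\alpha\varepsilon)+\log m+2\bigr) .
$$
Comparing the two estimates bounds $|y|$ --- and then $|x|$ --- in terms of $\rmh(\alpha\varepsilon)$ and $m$:
$$
\log\max(|x|,|y|)\ \le\ c_{12}\bigl(\rmh(\alpha\varepsilon)+\log m\bigr)\log\bigl(\rmh(\alpha\varepsilon)+\log m+2\bigr) .
$$

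\textbf{What remains, and the main obstacle.} It remains only to bound $\rmh(\alpha\varepsilon)$ by $c_{13}(1+\log m)$; inserting this into the last display then yields $\max\{|x|,|y|,e^{\rmh(\alpha\varepsilon)}\}\le m^{\cst{kappa1}}$ and finishes the proof. This last bound is the genuine difficulty, and it is \emph{not} produced by the argument above: when $\max(|x|,|y|)$ is small compared with $\rmh(\alpha\varepsilon)$, the linear form $z-1$ need not be small \emph{relative to} $\rmh(\alpha\varepsilon)$ --- the saving coming from a large conjugate $\max_i|\beta^{(i)}|\ge e^{\rmh(\alpha\varepsilon)-c(\alpha)}$ can be absorbed by the factor $|\beta^{(s)}-\beta^{(t)}|^{\pm1}$, which in full generality one controls only to within $e^{\pm c(\alpha)\rmh(\alpha\varepsilon)}$ with $c(\alpha)>1$. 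For $d=3$ there is essentially no freedom in the triple $\{i,j,i_0\}$; for $d\ge4$ there are enough embeddings to pick a more favourable triple (or to play several linear forms against one another), and this is exactly why one must restrict $\varepsilon$ to a subfamily of units --- controlling the archimedean sizes of the conjugates $\varepsilon^{(i)}$, equivalently the position of $(\log|\varepsilon^{(i)}|)_i$ in the unit lattice --- a subfamily which, as the paper shows, comprises almost all of $\ZKtimes$ once $d\ge4$. Proving the Conjecture in full would require a lower bound for linear forms in logarithms that is genuinely uniform in the parameter $\varepsilon$, which Baker's method does not provide; that gap is precisely the content of the Conjecture.
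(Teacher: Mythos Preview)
The statement you were asked to prove is a \emph{conjecture}; the paper does not prove it, and indeed the whole point of the paper is to establish the partial result (Theorem~\ref{theoreme:principal}) under the extra hypothesis $\varepsilon\in\calEtilde_\nu^{(\alpha)}$. So there is no ``paper's own proof'' to compare with, and your final paragraph is exactly right: the step ``bound $\rmh(\alpha\varepsilon)$ by $c_{13}\log m$'' is the missing ingredient, and your diagnosis --- that Baker's bound loses too much because $\gamma_0$ has height of order $\rmh(\alpha\varepsilon)$ --- is the same reason the authors need the hypothesis that $\Sigma_a(\nu)$ contain a second element.

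That said, your write-up of the part you \emph{do} carry out has a real slip. You assert
\[
\max_l|b_l|\le c_3\,\rmh(\eta)\le c_4(\rmh(\alpha\varepsilon)+\log m)
\quad\text{and}\quad
\rmh(\alpha\varepsilon)\le c_5(\max_l|b_l|+\log m),
\]
but neither inequality is automatic: $\eta$ is the unit attached to $x-\alpha\varepsilon y$, not to $\varepsilon$, so $\rmh(\eta)$ also involves $\log|y|$, and there is no a~priori two-sided comparison between $\rmh(\eta)$ and $\rmh(\alpha\varepsilon)$. In the paper's notation these are the parameters $B$ and $A$, and the inequalities $A\le\kappa B$ and $B\le\kappa' A$ are the content of Lemmas~\ref{Lemme:BestGrand} and~\ref{Lemme:BmajoreParA}; the second of these already requires an application of Proposition~\ref{Proposition:FormeLineaireLogarithmes}. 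Once you correct this, your Matveev step should read $\log C\asymp\log\bigl(2+B/A\bigr)$ rather than $\log(\rmh(\alpha\varepsilon)+\log m)$, exactly as in \S\ref{S:DeuxiemeArgumentDiophantien}. This does not change your conclusion --- the obstacle you identify is still the genuine one --- but the intermediate bounds as written are not justified.
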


We noted $\rmh$ the absolute logarithmic height (see {\rm{($\ref{Equation:hauteur}$)}} below). \\

To prove this conjecture, it suffices to restrict ourselves to units $\varepsilon$ of $K$ such that $\Q(\alpha\varepsilon)=K$: as a matter of fact, the field $K$ has but a finite number of subfields.
An equivalent formulation of the conjecture $\ref{ConjecturePrincipale}$ is then the following one: {\it if $xy\not=0$ and $\Q(\alpha\varepsilon)=K$, then 
 $$ 
|\rmN_{K/\Q} (x-\alpha\varepsilon y)|
 \ge \Newcst{kappa2} \max\{|x|,\; |y|,\; e^{\rmh(\alpha\varepsilon)}\}^{\Newcst{kappa3}}
$$
with effectively computable positive constants $\cst{kappa2}$ and $\cst{kappa3}$, depending only upon $\alpha$}.\\

The finiteness of the set of solutions $(x,y,\varepsilon)\in\Z^2\times\ZKtimes$ of the inequation $| F_\varepsilon(x,y)|\le m$ with $xy\neq 0$ and $[\Q(\alpha\varepsilon) : \Q]\ge 3$ follows from Corollary 3.6 of $\cite{LW1}$ (which deals with Thue--Mahler equations, while in this paper we restrict ourselves to Thue equations). 
The proof in $\cite{LW1}$ rests on Schmidt's subspace theorem; it allows to exhibit explicitly an upper bound for the number of solutions as a function of $m$, $d$ and the height of $\alpha$, but it does not allow to give an upper bound for the solutions. The particular case of the conjecture $\ref{ConjecturePrincipale}$, in which the form $F$ is of degree $3$ and the rank of the unit group of the cubic field $\Q(\alpha)$ is $1$, was taken care of in $\cite{LW2}$. In $\cite{LW3}$, we 
 considered a slightly more general case, namely when the number of real embeddings of $K$ into $\C$ is $0$ or $1$, while restricting to units $\varepsilon$ such that $\Q(\alpha\varepsilon)=K$. In this paper, we prove that the conjecture is true at least for a subset $\calEtilde_\nu^{(\alpha)}$ of units, the definition of which is given in the following.\\

Denote by $\Phi=\{\sigma_1,\ldots,\sigma_d\}$ the set of embeddings of $K$ into $\C$ and by $\house{\gamma}$ the {\it house} of an algebraic 
number $\gamma$, defined to be the maximum of the moduli of the Galois conjuguates of $\gamma$ in $\C$. In symbols, for $\gamma\in K$, 
$$
\house{\gamma}=\max_{1\le i\le d} |\sigma_i(\gamma)|.
$$
The {\it absolute logarithmic height} is noted $\rmh$ 
 and involves the {\it Mahler measure} $\rmM$: 
\begin{equation}\label{Equation:hauteur}
\rmh(\alpha)=\frac{1}{d}\log \rmM(\alpha)
\quad
\hbox{with}
\quad
\rmM(\alpha)=
a_0
 \prod_{1\le i\le d} \max\{1,|\sigma_i(\alpha)|\},
\end{equation}
$a_0$ being the leading coefficient of the irreducible polynomial of $\alpha$ over $\Z$.\\

The set 
$$
\calE^{(\alpha)}=\{\varepsilon\in \ZKtimes\; \mid \; \Q(\alpha\varepsilon)=K\}
$$
depends only upon $\alpha$; (we have supposed $\Q(\alpha)=K$). When $\nu$ is a real number in the interval 
$]0,1[$, 
we denote by $\calE_\nu^{(\alpha)}$ the set of units $\varepsilon\in\calE^{(\alpha)}$ for which there exist two distinct elements $\varphi_1 $ and $\varphi_2 $ of $\Phi$ such that 
$$
|\varphi_1(\alpha\varepsilon)|= \house{\alpha\varepsilon}
\quad \hbox{and}\quad
 |\varphi_2(\alpha\varepsilon)|\ge \house{\alpha\varepsilon}^{ \, \nu}.
 $$
We also denote by $\calEtilde_\nu^{(\alpha)}$ the set of units $\varepsilon\in\calE_\nu^{(\alpha)}$ such that $\varepsilon^{-1} \in\calE_\nu^{(1/\alpha)}$.\\
 
Let us state our main result.
\indent \begin{theoreme}\label{theoreme:principal}
Let $\nu \in 
]0,1[$. 
 There exist two effectively computable positive constants $ \Newcst{kappa1lambda}, \Newcst{kappa2lambda}$, depending
only upon $\alpha$ and $\nu$, which have the following properties:\\
\indent 
{\rm(a)} For any $m\ge 2$, each solution $(x,y,\varepsilon)\in\Z^2\times\calE_\nu^{(\alpha)}$ of the inequation $| F_\varepsilon(x,y)|\le m$ with $0<|x|\le |y|$ satisfies
$$
 \max\{|y|,\; e^{\rmh(\alpha\varepsilon)}\}\le m^{\cst{kappa1lambda}}.
$$ 
\indent {\rm(b)} For any $m\ge 2$, each solution $(x,y,\varepsilon)\in\Z^2\times\calEtilde_\nu^{(\alpha)}$ of the inequation $| F_\varepsilon(x,y)|\le m$ with $xy\neq 0$ satisfies
$$
 \max\{|x|,\; |y|,\; e^{\rmh(\alpha\varepsilon)}\}\le m^{\cst{kappa2lambda}}.
$$
 \end{theoreme}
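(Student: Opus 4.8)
The plan is to reduce the inequality $|F_\varepsilon(x,y)|\le m$ to a unit equation in the "broken line" variables and then apply Baker's theory of linear forms in logarithms. Writing $\beta=\alpha\varepsilon$, the hypothesis says $|\mathrm N_{K/\Q}(x-\beta y)|\le m$, so the principal ideal $(x-\beta y)$ has norm at most $m$. The first step is to split the $d$ embeddings of $K$ according to the size of $|\sigma_i(\beta)|$: since $\varepsilon\in\calE_\nu^{(\alpha)}$, there is an embedding $\varphi_1$ realizing the house and a second embedding $\varphi_2$ with $|\varphi_2(\beta)|\ge\house{\beta}^\nu$. The point of restricting to $\calE_\nu^{(\alpha)}$ is precisely that $\beta$ has two conjugates that are "large" in a comparable way, so that the classical three-term $S$-unit trick (Siegel's identity) can be run with quantitative control; without such a hypothesis one conjugate could dwarf all others and the argument would stall.

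**Next I would** set up Siegel's identity. Pick three embeddings among which $\varphi_1,\varphi_2$ appear, say with images $\beta_1,\beta_2,\beta_3$ of $\beta$; from the three linear forms $\lambda_k=x-\beta_k y$ one gets the identity
\begin{equation*}
(\beta_2-\beta_3)\lambda_1+(\beta_3-\beta_1)\lambda_2+(\beta_1-\beta_2)\lambda_3=0.
\end{equation*}
Each $\lambda_k$ factors as a bounded-norm algebraic integer times a unit; taking the unit to live in the group generated by $\varepsilon$ together with a fixed system of fundamental units of $K$ (here $r=\mathrm{rank}\,\ZKtimes$ enters), one rewrites the ratio $\dfrac{(\beta_3-\beta_1)\lambda_2}{(\beta_2-\beta_3)\lambda_1}$ as $-1$ minus a second such ratio, i.e. obtains an equation of the shape $1-u=v$ where $u,v$ are $S$-units with $S$ controlled by $m$ and by the class number. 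Applying the theory of linear forms in logarithms (in the form giving effective lower bounds for $|1-u|$) yields an upper bound for the heights of the exponents, hence for $\rmh(\beta)=\rmh(\alpha\varepsilon)$, polynomial in $\log m$ — and then a short argument bounds $|y|$ (and $|x|$) in terms of $\house{\beta}$ and $m$, using $0<|x|\le|y|$ to compare $|x-\varphi_1(\beta)y|$ with $|y|\cdot\house{\beta}$. This proves part (a).

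**For part (b)** the extra hypothesis $\varepsilon^{-1}\in\calE_\nu^{(1/\alpha)}$ is exactly what lets one run the same machine with the roles of $x$ and $y$ swapped: note $F_\varepsilon(x,y)=\pm x^d\, \tilde f_{\varepsilon^{-1}}(y/x)$ up to leading coefficients, where $\tilde f$ is built from $1/\alpha$, so $|F_\varepsilon(x,y)|\le m$ is also a Thue inequality for the form attached to $(1/\alpha)\varepsilon^{-1}$ evaluated at $(y,x)$. Part (a) applied to that form and that unit bounds $\max\{|x|,e^{\rmh((1/\alpha)\varepsilon^{-1})}\}$ by a power of $m$; combining with the bound on $|y|$ from part (a) and the elementary relation between $\rmh(\alpha\varepsilon)$ and $\rmh((1/\alpha)\varepsilon^{-1})$ (they differ by a bounded amount depending only on $\alpha$) gives the symmetric conclusion.

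**The main obstacle I expect** is the bookkeeping that keeps every constant effective and genuinely dependent only on $\alpha$ and $\nu$: one must control the regulator and class number of $K$ (fixed, so harmless), but crucially one must bound the height of the \emph{variable} unit $\varepsilon$ in terms of the data, and to feed Baker's bounds one needs the heights of the conjugates $\beta_i-\beta_j$ appearing as coefficients in the linear form — this is where the hypothesis $|\varphi_2(\beta)|\ge\house{\beta}^\nu$ does the real work, preventing the coefficient heights from being too large relative to $\house{\beta}$ and guaranteeing that the linear form in logarithms is not identically small for trivial reasons. Making the interplay between "$\house{\beta}$ large" and "the Baker bound is polynomial in $\log m$" quantitative, uniformly over $\varepsilon\in\calE_\nu^{(\alpha)}$, is the technical heart of the proof.
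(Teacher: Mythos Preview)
Your overall architecture is right: Siegel's identity plus Baker for part~(a), and the reciprocal form to swap $x,y$ for part~(b) (this is exactly what the paper does in \S\ref{S:DemThmPpal}). But there is a genuine gap in your treatment of part~(a), and your explanation of where the hypothesis $\varepsilon\in\calE_\nu^{(\alpha)}$ enters is not the correct mechanism.

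A single application of Baker's theorem to the Siegel identity bounds the exponents in the unit part of $\lambda_k=x-\sigma_k(\alpha\varepsilon)y$; in the paper's notation this is the parameter $B$. It does \emph{not} directly bound $A\sim\rmh(\alpha\varepsilon)$. The algebraic coefficients $\beta_i-\beta_j$ that appear in your linear form have height of order $A$, so the Baker inequality you obtain has the shape $B\le cA\log(2+B/A)$, i.e.\ $B\le c'A$. Combined with the elementary reverse inequality $A\le c''B$ (which follows from $|y|\ge 1$ and $|x|\le|y|$), this merely says $A$ and $B$ are comparable; it does not bound either by a power of $\log m$. Your claim that the $\nu$--hypothesis ``prevents the coefficient heights from being too large relative to $\house{\beta}$'' is not right: those heights are always $\asymp A$ regardless of $\nu$.

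What the paper actually does is argue by contradiction. Assuming $A,B\ge\kappa\log m$, it proves two structural uniqueness statements: first (via one Baker application) that the embedding $\tau_b$ minimizing $|\varphi(\beta)|$ is the \emph{only} element of $T_b(\nu)$; then, using this and a further Baker application (Lemmas~\ref{Lemme:strategieA} and \ref{Lemme:varphinotsigmaa}), that the embedding $\sigma_a$ maximizing $|\varphi(\alpha\varepsilon)|$ is the \emph{only} element of $\Sigma_a(\nu)$. The hypothesis $\varepsilon\in\calE_\nu^{(\alpha)}$ says precisely that $\Sigma_a(\nu)$ has at least two elements, and that is the contradiction. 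So the role of $\nu$ is not to tame heights in the linear form, but to supply a second large conjugate of $\alpha\varepsilon$ that collides with a uniqueness statement extracted from the Diophantine analysis. Your sketch is missing these intermediate uniqueness steps, and without them the argument does not close.
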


Proposition $\ref{proposition:densitepositive}$, stated below and proved in \S\ref{S:DemPropositionDensite}, means that $\calEtilde_\nu^{(\alpha)}$
 for $d\ge 4$ has a positive density in the set $\calE ^{(\alpha)}$. Since the case of a non-totally real cubic field has been taken care of in $\cite{LW2}$, it is only in the case of a totally real cubic field that our main result
provides no effective bound for an infinite family of Thue equations.\\

When $N$ is a real positive number 
and $\calF$ is a subset of $\ZKtimes$, 
we define 
$$
 \calF(N)
 =\{\varepsilon\in \calF \; \mid \; \house{\alpha\varepsilon}\le N\}
 \quad
 \hbox{and}\quad
 |\calF(N)|=\Card \calF(N),
$$
so 
$$
\calF(N)
=
 \ZKtimes(N)\cap \calF.
$$

\begin{proposition}\label{proposition:densitepositive}
\null $\phantom{}$ 
{\rm (a)} 
The limit 
$$ 
\lim _{N\rightarrow \infty} \frac{ |\ZKtimes(N)|}{(\log N)^r}
$$ 
exists and is positive.
\\
\indent {\rm (b)} One has 
$$
\liminf _{N\rightarrow \infty} \frac{ |\calE^{(\alpha)}(N)|}{ |\ZKtimes(N)|}>0.
$$ 
\indent 
{\rm (c)} For $0<\nu<1/2$, one has 
$$
\liminf _{N\rightarrow \infty} \frac{ |\calE_\nu^{(\alpha)}(N)|}{(\log N)^r}>0.
$$ 
\indent {\rm (d)}
For $0<\nu<1$ and $d\ge 4$, one has 
$$
\liminf _{N\rightarrow \infty} \frac{ |\calEtilde_\nu^{(\alpha)}(N)|}{(\log N)^r}>0.
$$
\end{proposition}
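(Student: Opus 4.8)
The plan is to reduce the proposition, via Dirichlet's unit theorem, to counting points of the logarithmic image of $\ZKtimes$ in suitable regions. Let $w_K$ be the number of roots of unity in $K$ and let $L\colon\ZKtimes\to\R^d$, $L(\varepsilon)=(\log|\sigma_1(\varepsilon)|,\dots,\log|\sigma_d(\varepsilon)|)$, the coordinates attached to a complex place being repeated. Then $L$ kills the torsion and induces an isomorphism of $\ZKtimes$ modulo torsion onto a rank-$r$ lattice $\Lambda$ in the hyperplane $H=\{t\in\R^d:\sum_i t_i=0\}$; all point counts below take place in the $r$-dimensional real span of $\Lambda$. With $a_i=\log|\sigma_i(\alpha)|$ one has $\log|\sigma_i(\alpha\varepsilon)|=a_i+L(\varepsilon)_i$, so the condition $\house{\alpha\varepsilon}\le N$ reads $L(\varepsilon)\in P(N):=\{t\in H:\ t_i\le\log N-a_i\ (1\le i\le d)\}$, a bounded convex polytope which, divided by $\log N$, tends to the fixed polytope $P_\infty=\{w\in H:w_i\le1\}$ of positive $r$-dimensional volume. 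Part (a) then follows from the identity $|\ZKtimes(N)|=w_K\,\Card(\Lambda\cap P(N))$ together with the standard estimate $\Card(\Lambda\cap P(N))=\mathrm{vol}_r(P_\infty)(\log N)^r/\mathrm{covol}(\Lambda)+O((\log N)^{r-1})$: the limit exists and equals $w_K\,\mathrm{vol}_r(P_\infty)/\mathrm{covol}(\Lambda)>0$.

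For part (b), note that $\varepsilon\notin\calE^{(\alpha)}$ means $\alpha\varepsilon$ lies in some proper subfield $K'$ of $K$, and that for fixed $K'$ the set $S_{K'}=\{\varepsilon\in\ZKtimes:\alpha\varepsilon\in K'\}$ is either empty or a coset $\varepsilon_0\,\Z_{K'}^\times$ of the unit group of $K'$ inside $\ZKtimes$, with $\varepsilon_0\notin K'$ (otherwise $\alpha=\varepsilon_0^{-1}(\alpha\varepsilon_0)\in K'$, contradicting $\Q(\alpha)=K$). As $K$ has only finitely many subfields, $\ZKtimes\setminus\calE^{(\alpha)}$ is a finite union of such cosets; the ones with $\mathrm{rank}\,\Z_{K'}^\times<r$ contribute only $O((\log N)^{r-1})$ and are negligible against $|\ZKtimes(N)|$. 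I would then check that a proper subfield $K'$ with $\mathrm{rank}\,\Z_{K'}^\times=r$ must have all its archimedean places non-split in $K$, which forces $[K:K']=2$, $K'$ totally real and $K$ totally complex; thus $K$ is a CM field and $K'$ is its unique maximal totally real subfield $K^+$. Since $\varepsilon_0\notin K^+$, the rank-$r$ subgroup $\Z_{K^+}^\times$ has index $\ge 2$ in $\ZKtimes$, so $S_{K^+}$ has density $\le\tfrac12$ in $\ZKtimes$; combining, $|\calE^{(\alpha)}(N)|\ge(\tfrac12-o(1))\,|\ZKtimes(N)|$, and (a) finishes (b).

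For parts (c) and (d) I would argue by construction rather than exclusion. Put $u_k=a_k+L(\varepsilon)_k=\log|\sigma_k(\alpha\varepsilon)|$, so $\sum_k u_k=\log|\rmN_{K/\Q}(\alpha)|$. If $K$ is totally real and $u_1>u_2>\dots>u_d$, the $\sigma_k(\alpha\varepsilon)$ have pairwise distinct moduli, hence are pairwise distinct, so $\Q(\alpha\varepsilon)=K$ automatically; moreover $\sigma_1$ realizes $\house{\alpha\varepsilon}$ and $\sigma_d$ realizes $\house{(\alpha\varepsilon)^{-1}}$, so $\varepsilon\in\calE_\nu^{(\alpha)}$ as soon as $u_2\ge\nu u_1$, and $\varepsilon^{-1}\in\calE_\nu^{(1/\alpha)}$ as soon as $u_{d-1}\le\nu u_d$. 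For (d) I take the polytope
\[
R(N)=\{t\in H:\ u_1\le\log N-c_0,\ u_1>u_2>\dots>u_d,\ u_2\ge\nu u_1,\ u_{d-1}\le\nu u_d\}
\]
for a suitable constant $c_0$ depending on $\alpha$; when $d\ge 4$ the indices $1,2,d-1,d$ are distinct and one checks that $R(N)$ has nonempty interior in $H$ — for instance near $u\approx(\log N)(1,\nu_0,\dots,\nu_0,-\nu_0,-1)$ with $\nu<\nu_0<1$, the middle coordinates taken decreasing so that $\sum_k u_k\approx0$. Then $R(N)$ is, up to lower order, $\log N$ times a fixed polytope of positive $r$-dimensional volume, so $|\calEtilde_\nu^{(\alpha)}(N)|\ge\Card(\Lambda\cap R(N))\gg(\log N)^r$. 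For (c) one drops the last inequality; the resulting region is full-dimensional for every $\nu\in(0,1)$ — in particular for $\nu<\tfrac12$ — already when $d=3$, where $\calEtilde_\nu^{(\alpha)}$ may be empty while $\calE_\nu^{(\alpha)}$ is not, and the same lattice count applies. When $K$ is not totally real the presence of a conjugate pair only helps: placing an extreme value at a complex place makes the corresponding inequality automatic, while $\varepsilon\notin\calE^{(\alpha)}$ holds only on the density-$\le\tfrac12$ set identified in (b), so a full-dimensional region of $\Lambda$ survives in that case too.

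The one step that is not bookkeeping is the classification used in (b): recognizing that a proper subfield with unit group of full rank $r$ can only be the maximal totally real subfield of a CM field, and bounding the density of the associated coset by $\tfrac12$ — without this input the complement of $\calE^{(\alpha)}$ could a priori be of density $1$. In (c)--(d) the only genuine verification is that the polytopes $R(N)$ have nonempty interior in $H$, and this is exactly where the hypothesis $d\ge 4$ (and the restriction on $\nu$) is used.
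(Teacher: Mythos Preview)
Your overall strategy---Dirichlet's logarithmic embedding, lattice point counting in polytopes that scale with $\log N$, the CM classification for (b), and explicit polytope constructions for (c)--(d)---is exactly the paper's. Two remarks on the comparison and one genuine gap:

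For (b) the paper argues slightly differently in the CM case: rather than bounding the density of the coset $S_{K^+}$ from above, it observes that for $\varepsilon\in\Z_{K^+}^\times$ one has $\alpha\varepsilon\notin K^+$ (since $\alpha\notin K^+$), so those units already lie in $\calE^{(\alpha)}$ up to the lower-rank subfields. Your coset argument is equivalent and equally short.

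For (c)--(d) in the totally real case your conditions $u_{(2)}\ge\nu u_{(1)}$ and $u_{(d-1)}\le\nu u_{(d)}$ are sharper than the paper's, which imposes $x_i\ge\nu M$ (resp.\ $x_j\le-\nu M$) against the fixed bound $M=\log N$ rather than against the actual extremum. This is why the paper needs $\nu<1/2$ in (c) while your construction already works for all $\nu<1$---a small bonus of your formulation. Your device of forcing $u_1>u_2>\cdots>u_d$ to secure $\Q(\alpha\varepsilon)=K$ is also neater than appealing back to (b).

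The genuine gap is your treatment of (d) when $K$ is not totally real. Your one sentence (``placing an extreme value at a complex place makes the corresponding inequality automatic'') covers $r_2\ge 2$ (put the maximum and the minimum at distinct complex places) but not $r_2=1$. With a single complex place you can absorb only one of the two extremal conditions; the other must be realized by two distinct \emph{real} embeddings whose values lie within a factor $\nu$ of each other, and you have not checked that this still leaves a full-dimensional region in the $(r{=}d{-}2)$--dimensional hyperplane. The paper carries out precisely this case via its set $D''_\nu$ (the case $r_1\ge 2$, $r_2\ge 1$), and the verification, while routine, is where the hypothesis $d\ge 4$ (hence $r_1\ge 2$ when $r_2=1$) is actually used. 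You should spell out that case.
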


 Let us write the irreductible polynomial $f$ of $\alpha$ over $\Z$ as
$$
f(X)=a_0X^d+a_1X^{d-1}+\cdots+a_{d-1}X+a_d\in\Z[X],
$$ 
whereupon 
$$
f(X)= a_0\prod_{i=1}^d \bigl(X-\sigma_i(\alpha)\bigr)
$$
and its associated irreducible binary form 
$F$ is 
$$
F(X,Y)=Y^d f(X/Y)=a_0X^d+a_1X^{d-1}Y+\cdots+a_{d-1}XY^{d-1}+a_dY^d.
$$
For $\varepsilon\in\ZKtimes$ verifying $\Q(\alpha\varepsilon)=K$, we have 
 $$
F_\varepsilon (X,Y)=a_0\prod_{i=1}^d \bigl(X-\sigma_i(\alpha\varepsilon)Y\bigr)\in\Z[X,Y].
$$
Given $(x,y,\varepsilon)\in\Z^2\times\ZKtimes$, we define 
$$
\beta=x- \alpha\varepsilon y.
$$
Therefore 
\begin{equation}\label{normebeta}
F_\varepsilon(x,y)=
a_0\sigma_1(\beta) \cdots \sigma_d(\beta).
\end{equation}

 Dirichlet's unit theorem provides the existence of units $\epsilon_1,\ldots,\epsilon_r$ in $K$, the classes modulo $K^\times_{\tors}$ of which form a basis of the 
free abelian group $\ZKtimes/K^\times_{\tors}$. Effective versions (see for instance $\cite{ST}$) provide bounds for the heights of these units as a function of $\rmh(\alpha)$ and $d$. \\

{\bf Steps of the proof.}
In \S$\ref{S:outils}$ we quote useful lemmas, the most powerful being a proposition of 
 $\cite{GL326}$ involving transcendence methods and giving lower bounds for the distance between 1 and a product of powers
 of algebraic numbers. Each time we will use that proposition, we will write that we are using a diophantine argument. 
After introducing some parameters $A$ and $B$ in \S$\ref{S:parametres}$,
we eliminate $x$ and $y$ between the equations $\varphi(\beta)=x-\varphi(\alpha\varepsilon)y$, $\varphi\in\Phi$. 
In \S$\ref{S:QuatreEnsemblesPlongements}$ we introduce four privileged embeddings, denoted by $\sigma_a$, $\sigma_b$, $\tau_a$, $\tau_b$, and four useful sets of embeddings $\Sigma_a(\nu)$, $\Sigma_b(\nu)$, $T_a(\nu)$, $T_b(\nu)$, depending on a parameter $\nu$.
Applying some results from $\cite{LW3}$, we show in 
 \S$\ref{S:minorationAetB}$ that we may suppose $A$ and $B$ sufficiently large, namely $\ge \kappa\log m$, via a diophantine argument. 
In \S$\ref{S:MajorationAparB}$ and in \S$\ref{S:DeuxiemeArgumentDiophantien}$, we prove that $A$ is bounded from above by $\kappa B$ and that 
 $B$ is bounded from above by $\kappa' \! A$. 
 In \S$\ref{S:UniciteTaub}$ we prove that $\tau_b$ is unique.
In \S$\ref{S:Evaluation}$ we give an upper bound for $|\tau_b(\alpha\varepsilon)|$. 
In \S$\ref{S:varphinotsigmaa}$ 
we deduce
 that $\sigma_a$ is unique.
In \S$\ref{S:DemThmPpal}$ we
 complete the proof of Theorem $\ref{theoreme:principal}$. In \S$\ref{S:DemPropositionDensite}$ we give the proof of Proposition $\ref{proposition:densitepositive}$.

\section{Tools}\label{S:outils}

This chapter contains the auxiliary lemmas we shall need. The details of the proofs are in $\cite{LW3}$. We start with an equivalence of norms (Lemma $\ref{Lemme:PlongementAdapte}$). Then we state Lemma $\ref{Lemme: LemmaA.15ST}$, which appeared as Lemma 
2 of $\cite{LW2}$ and also as Lemma 6 of $\cite{LW3}$. Next we quote Proposition $\ref{Proposition:FormeLineaireLogarithmes}$ (which is Corollary 9 of
 $\cite{LW3}$) involving a lower bound of a linear form in logarithms of algebraic numbers. 

\subsection{Equivalence of norms}\label{S:EquivalenceNormes}

Let $K$ be an algebraic number field of degree $d$ over $\Q$. Let us recall that $\epsilon_1,\ldots,\epsilon_r$ denote the 
elements of a basis of the unit group 
of $K$ modulo $K^\times_{\tors}$ and that we are supposing $r\ge 1$. \\

There exists an effectively computable positive constant $\Newcst{hauteurgamma}$, depending only upon $\epsilon_1,\ldots,\epsilon_r$, such that,
 if $c_1,\ldots, c_r$ are rational integers and if we let 
$$
C=\max\{ 
 |c_1|,\ldots,|c_r|\},\quad 
\gamma= \epsilon_1^{c_1}\cdots \epsilon_r^{c_r},
$$
 then 
\begin{equation}\label{Equation:EstimationsTriviales}
e^{-\cst{hauteurgamma} C }\le |\varphi(\gamma)|\le e^{ \cst{hauteurgamma} C }
\end{equation}
 for each embedding $\varphi$ of $K$ into $\C$. 
 
The following lemma (see Lemma 5 of \cite{LW3}) shows that the two inequalities of {\rm{($\ref{Equation:EstimationsTriviales}$)}} 
are optimal. 

\begin{lemme}\label{Lemme:PlongementAdapte}
There exists an effectively computable positive constant $\Newcst{kappa:PlongementAdapte}$, which depends only upon $\epsilon_1,\ldots,\epsilon_r$, with the following property. If $c_1,\ldots, c_r$ are rational integers and if we let 
$$
C=\max\{ 
 |c_1|,\ldots,|c_r|\},\quad 
\gamma= \epsilon_1^{c_1}\cdots \epsilon_r^{c_r},
$$
then there exist two embeddings $\sigma$ and $\tau$ of $K$ into $\C$ such that 
$$
|\sigma(\gamma) |\ge e^{\cst{kappa:PlongementAdapte}C }
\quad \hbox{and} \quad
|\tau(\gamma) |\leq e^{-\cst{kappa:PlongementAdapte} C}.
$$
\end{lemme}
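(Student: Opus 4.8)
The plan is to deduce the lemma from the known estimates $(\ref{Equation:EstimationsTriviales})$ together with the fact that units have norm $\pm 1$ and that the "log-embedding" map is injective on $\ZKtimes/K^\times_{\tors}$. Concretely, write $\ell_i = \log|\sigma_i(\gamma)|$ for $1\le i\le d$, and recall the product formula: since $\gamma$ is a unit, $\sum_{i=1}^d \ell_i = \log|\rmN_{K/\Q}(\gamma)| = 0$. Because not all $\ell_i$ can be zero unless $\gamma$ is a root of unity (and if $C\ge 1$ then $\gamma\notin K^\times_{\tors}$, since $\epsilon_1,\ldots,\epsilon_r$ are independent modulo torsion), there is at least one index with $\ell_i>0$ and at least one with $\ell_j<0$. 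Set $\sigma=\sigma_i$ and $\tau=\sigma_j$ with $\ell_i=\max_k\ell_k$ and $\ell_j=\min_k\ell_k$; then $\ell_i\ge 0\ge \ell_j$, and the content of the lemma is to make these inequalities quantitatively strong, i.e.\ of size $\gg C$.

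First I would handle the regime $C\le C_0$ for a fixed constant $C_0$ by a compactness/finiteness remark: there are only finitely many $\gamma$ with $C\le C_0$, each noncentral one having some $|\sigma(\gamma)|>1$ and some $|\tau(\gamma)|<1$ strictly, so a suitable $\cst{kappa:PlongementAdapte}$ works after shrinking. Hence one may assume $C$ large. For large $C$, consider the vector $\underline{\ell}=(\ell_1,\ldots,\ell_d)$; it lies in the trace-zero hyperplane and equals $\sum_{s=1}^r c_s \underline{v}^{(s)}$, where $\underline{v}^{(s)}=(\log|\sigma_1(\epsilon_s)|,\ldots,\log|\sigma_d(\epsilon_s)|)$. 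By Dirichlet's unit theorem the vectors $\underline{v}^{(1)},\ldots,\underline{v}^{(r)}$ are linearly independent, so the linear map $\Z^r\to\R^d$, $\underline{c}\mapsto\underline{\ell}$, is injective with a left inverse; consequently $\|\underline{\ell}\|_\infty \ge c_1' \|\underline{c}\|_\infty = c_1' C$ for an effective $c_1'>0$ depending only on $\epsilon_1,\ldots,\epsilon_r$ (the operator norm of the left inverse). Thus $\max_i|\ell_i|\ge c_1' C$. Since $\sum_i\ell_i=0$, the maximum $\ell_{\max}$ and minimum $\ell_{\min}$ satisfy $\ell_{\max}\ge \frac{1}{d}\max_i|\ell_i|\ge \frac{c_1'}{d}C$ and likewise $-\ell_{\min}\ge \frac{c_1'}{d}C$ (if, say, $\max_i|\ell_i|=\ell_{\max}$, use $0 = \sum_i \ell_i \le \ell_{\max} + (d-1)\ell_{\min}$ to bound $\ell_{\min}$, and symmetrically). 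Taking $\cst{kappa:PlongementAdapte} = c_1'/d$ (further reduced to cover small $C$) and choosing $\sigma,\tau$ to realize $\ell_{\max},\ell_{\min}$ gives the claim.

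The main obstacle is purely the effectivity and explicitness of the constant $c_1'$, i.e.\ controlling the operator norm of the left inverse of the regulator-type matrix built from $\epsilon_1,\ldots,\epsilon_r$; but this depends only on $\epsilon_1,\ldots,\epsilon_r$ and is computable (it is essentially $1/$(smallest singular value), bounded below in terms of the regulator), so it fits the statement. A minor point to get right is the dichotomy over whether the extreme value of $|\ell_i|$ is attained at a positive or a negative coordinate, and the bookkeeping with the factor $1/d$; this is routine once the linear-algebra lower bound $\|\underline{\ell}\|_\infty\ge c_1' C$ is in hand. One should also note that $(\ref{Equation:EstimationsTriviales})$ is not strictly needed for the argument above, but it guarantees the chosen $\cst{kappa:PlongementAdapte}$ may be taken $\le \cst{hauteurgamma}$, consistently with the remark that $(\ref{Equation:EstimationsTriviales})$ is then optimal.
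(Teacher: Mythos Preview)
Your argument is correct and is the standard proof via the logarithmic embedding: the linear map $\underline{c}\mapsto(\log|\sigma_1(\gamma)|,\ldots,\log|\sigma_d(\gamma)|)$ is injective because $\epsilon_1,\ldots,\epsilon_r$ are independent modulo torsion, hence $\max_i|\ell_i|\ge c_1'C$ for an effective $c_1'>0$; the trace-zero condition $\sum_i\ell_i=0$ then forces both $\ell_{\max}$ and $-\ell_{\min}$ to be at least $\max_i|\ell_i|/(d-1)$. Two minor remarks: first, in your parenthetical you wrote $0=\sum_i\ell_i\le \ell_{\max}+(d-1)\ell_{\min}$, but the inequality should be $\ge$ (since each $\ell_i\ge\ell_{\min}$), which is what gives $-\ell_{\min}\ge\ell_{\max}/(d-1)$; this is clearly just a slip. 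Second, your separate treatment of small $C$ is unnecessary: the linear-algebra lower bound $\|\underline{\ell}\|_\infty\ge c_1'\|\underline{c}\|_\infty$ holds for all real vectors $\underline{c}$, and the case $C=0$ (i.e.\ $\gamma=1$) is trivial.

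As for comparison with the paper: the present paper does not actually give a proof of this lemma, but simply refers to Lemma~5 of \cite{LW3}. Your write-up therefore supplies what the paper omits, and is exactly the kind of argument one expects behind that citation.
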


\begin{remarque}
Under the hypotheses of Lemma $\ref{Lemme:PlongementAdapte}$, if $\gamma_0$ is a nonzero element of $K$ and if we let 
$\gamma_1=\gamma_0\gamma$, one deduces
$$
e^{-\cst{hauteurgamma} C -d\rmh(\gamma_0)}\le
\min_{\varphi\in\Phi} | \varphi(\gamma_1)|
\le
 e^{-\cst{kappa:PlongementAdapte} C +d \rmh(\gamma_0)}
$$
and
$$
e^{\cst{kappa:PlongementAdapte}C -d\rmh(\gamma_0) }
\le
\max_{\varphi\in\Phi} |\varphi(\gamma_1)|
\le
e^{ \cst{hauteurgamma} C +d\rmh(\gamma_0) }.
$$ 
 \end{remarque}

\subsection{On the norm}\label{SS:norme}

The following lemma is a consequence of Lemma A.15 of $\cite{ST}$ (see also Lemma 2 of $\cite{LW2}$ and Lemma 6 of \cite{LW3}). 

 \begin{lemme}\label{Lemme: LemmaA.15ST}
 Let $K$ be a field of algebraic numbers of degree $d$ over $\Q$ with regulator $R$. There exists an effectively computable positive constant $\Newcst{LemmaA.15STbis}$, depending only on $d$ and $R$, such that, if $\gamma$ is an element of $\ZK$, the norm of which has an absolute value $\le m$ with $m\ge 2$, then there exists a unit $\varepsilon\in\ZKtimes$ such that 
\begin{equation}\label{Equation:lemmeA.15ST}
 \max_{1\le j\le d} 
 \left| 
 \sigma_j(\varepsilon\gamma)
 \right|
 \le
m^{ \cst{LemmaA.15STbis}}.
 \end{equation} 
 \end{lemme}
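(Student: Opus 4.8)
The plan is to work with the logarithmic embedding of the unit group and to approximate the logarithmic vector of $\gamma$ by that of a unit. For $\eta\in K^\times$ set
$$
L(\eta)=\bigl(\log|\sigma_1(\eta)|,\ldots,\log|\sigma_d(\eta)|\bigr)\in\R^d,
$$
so that $L\colon K^\times\to\R^d$ is a group homomorphism, the sum of the coordinates of $L(\eta)$ equals $\log|\rmN_{K/\Q}(\eta)|$, and $L(\eta)$ takes equal values at complex conjugate embeddings. The image $\Lambda=L(\ZKtimes)$ is a lattice of rank $r$ spanning the $r$-dimensional subspace $H=\{x\in\R^d:\sum_ix_i=0,\ x_i=x_j\text{ at conjugate pairs}\}$, whose covolume in $H$ equals the regulator $R$ up to a factor depending only on $d$. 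The first — and only delicate — step is to invoke, from Lemma A.15 of $\cite{ST}$, the existence of a fundamental system of units $\epsilon_1,\ldots,\epsilon_r$ of $K$ with $\|L(\epsilon_j)\|\le c_1$ for $1\le j\le r$, where $c_1$ is effectively computable in terms of $d$ and $R$ only.

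Granting this, I would argue as follows. Let $\gamma\in\ZK$ be nonzero with $|\rmN_{K/\Q}(\gamma)|\le m$. As $\gamma$ is a nonzero algebraic integer, $\ell:=\log|\rmN_{K/\Q}(\gamma)|$ satisfies $0\le\ell\le\log m$. Decompose $L(\gamma)=\mathbf w+\mathbf u$ with $\mathbf w=(\ell/d)(1,\ldots,1)$ and $\mathbf u:=L(\gamma)-\mathbf w\in H$; then $\|\mathbf w\|\le\ell/\sqrt d\le(\log m)/\sqrt d$. Writing $\mathbf u=\sum_{j=1}^rt_jL(\epsilon_j)$ with $t_j\in\R$, pick integers $b_j$ with $|t_j-b_j|\le1/2$ and set $\varepsilon=\epsilon_1^{-b_1}\cdots\epsilon_r^{-b_r}\in\ZKtimes$. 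Then
$$
L(\varepsilon\gamma)=L(\gamma)-\sum_{j=1}^rb_jL(\epsilon_j)=\mathbf w+\sum_{j=1}^r(t_j-b_j)L(\epsilon_j),
$$
whence
$$
\|L(\varepsilon\gamma)\|\le\frac{\log m}{\sqrt d}+\frac12\sum_{j=1}^r\|L(\epsilon_j)\|\le\frac{\log m}{\sqrt d}+\frac{rc_1}{2}.
$$
Since the $i$-th coordinate of $L(\varepsilon\gamma)$ is $\log|\sigma_i(\varepsilon\gamma)|$, this gives $\log|\sigma_i(\varepsilon\gamma)|\le(\log m)/\sqrt d+rc_1/2$ for all $i$; using $m\ge2$ (so $\log m\ge\log 2$), the right-hand side is at most $\bigl(1/\sqrt d+rc_1/(2\log 2)\bigr)\log m$, which yields $(\ref{Equation:lemmeA.15ST})$ with $\cst{LemmaA.15STbis}=1/\sqrt d+rc_1/(2\log 2)$, a constant depending only on $d$ and $R$ (recall $r\le d-1$ and $c_1=c_1(d,R)$).

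The main obstacle is the first step, and it is exactly the point where the regulator — not merely the covolume of $\Lambda$ — is used. I would establish it as follows: by Minkowski's second theorem the product $\lambda_1\cdots\lambda_r$ of the successive minima of $\Lambda$ is comparable, up to a factor depending only on $d$, to the covolume of $\Lambda$, hence to $R$; on the other hand $\lambda_1\ge c_2(d)>0$, since the shortest nonzero vector of $\Lambda$ has length at least $\log\house{\eta}$ for some non-torsion unit $\eta$ of a number field of degree $\le d$, and the house of such a unit exceeds a constant $>1$ depending only on $d$ (the algebraic integers of degree $\le d$ and house $\le2$ being finite in number). Combining the two estimates bounds $\lambda_r$ by some $c_3(d,R)$, and a standard fact from the geometry of numbers (the existence of a basis $b_1,\ldots,b_r$ of $\Lambda$ with $\|b_j\|\le j\lambda_j$) then produces a $\Z$-basis of $\Lambda$, i.e. a fundamental system of units, with $\|L(\epsilon_j)\|\le c_1(d,R)$; this is the content of Lemma A.15 of $\cite{ST}$.
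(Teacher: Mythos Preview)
Your argument is correct and is the standard route to this inequality. The paper does not give its own proof of the lemma: it merely records that the statement is a consequence of Lemma~A.15 of \cite{ST} (and appeared in \cite{LW2}, \cite{LW3}). What you have written is precisely the proof behind that citation --- decompose $L(\gamma)$ into its component along $(1,\dots,1)$ and its component in the unit hyperplane, then round the latter to the lattice $L(\ZKtimes)$ using a fundamental system of units whose logarithmic vectors are bounded in terms of $d$ and $R$. Your final paragraph, deriving that bounded system from Minkowski's second theorem together with the elementary lower bound on the first minimum (Kronecker plus the finiteness of algebraic integers of bounded degree and house), is exactly how Lemma~A.15 of \cite{ST} is proved. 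So there is nothing to compare: you have supplied the argument that the paper outsources.
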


\subsection{Diophantine tool}\label{SS:outilsdiophantiens}

We will use the particular case of Theorem 9.1 of $\cite{GL326}$ (stated in Corollary 9 of \cite{LW3}). Such estimates (known as {\it lower bounds for linear forms in logarithms of algebraic numbers}) first occurred in the work of A.O.~Gel'fond, then in the work of A.~Baker - a historical survey is given in \cite{LW3}. 
 
\begin{proposition}\label{Proposition:FormeLineaireLogarithmes}
Let $s$ and $D$ two positive integers. There exists an effectively computable positive constant $\Newcst{kappa:FLL}$, depending only upon $s$ and $D$, with the following property.
Let $\gamma_1,\ldots, \gamma_s$ be nonzero algebraic numbers generating a number field of degree $\le D$. Let $c_1,\ldots,c_s$ be rational integers and let $H_1,\ldots,H_s$ be real numbers $\ge 1$ satisfying $H_j\le H_s$ for $1\le j\le s$ and 
$$
H_i\ge \rmh(\gamma_i) \quad (1\le i\le s).
$$ 
 Let $C$ be a real number subject to 
$$
C\ge 2, \quad
C\ge
\max_{1\le j\le s} \left\{
 \frac{H_j}{H_s} |c_j|\right\}. 
$$
Suppose also $\gamma_1^{c_1}\cdots \gamma_s^{c_s} \not=1$.
Then 
$$
|\gamma_1^{c_1}\cdots \gamma_s^{c_s}-1|>
\exp\{-
\cst{kappa:FLL} H_1\cdots H_s\log C\}. 
$$
 \end{proposition}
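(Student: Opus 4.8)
\noindent The plan is to establish this Baker-type estimate by the Gel'fond--Baker transcendence method --- auxiliary function, extrapolation, zero estimate --- in the sharpened form that produces the product $H_1\cdots H_s$ of the heights and the mild dependence $\log C$ on the coefficients $c_j$ (which is the delicate point). First I reduce to a linear form in logarithms: fixing suitable determinations of the logarithms, put $\Lambda=c_1\log\gamma_1+\cdots+c_s\log\gamma_s$, a branch of $\log(\gamma_1^{c_1}\cdots\gamma_s^{c_s})$; since $e^\Lambda-1$ and $\Lambda$ have comparable absolute value once $|\Lambda|\le\tfrac12$, it suffices to bound $|\Lambda|$ from below, so I assume for contradiction $|\Lambda|\le\exp\{-\cst{kappa:FLL}\,H_1\cdots H_s\log C\}$ with $\cst{kappa:FLL}$ large. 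One may also assume $c_s\ne0$ and write $\log\gamma_s=\beta_1\log\gamma_1+\cdots+\beta_{s-1}\log\gamma_{s-1}+\Lambda/c_s$ with $\beta_j=-c_j/c_s$, so that $\log\gamma_s$ is extremely well approximated by $\sum_{j<s}\beta_j\log\gamma_j$, and correspondingly $\gamma_s^z$ by $\prod_{j<s}\gamma_j^{\beta_j z}$ on discs of controlled radius.

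Next I build the auxiliary function. I pick integer parameters $L_0,\dots,L_{s-1}$ (degrees in the variables), $S$ (number of interpolation points) and $T$ (order of vanishing), their sizes dictated by $H_1,\dots,H_s$ and $\log C$; Siegel's lemma (a pigeonhole argument using that there are more unknowns than linear equations) then supplies a nonzero family of rational integers $p(\lambda_0,\dots,\lambda_{s-1})$ of controlled size for which the entire function $\Phi(z)=\sum_\lambda p(\lambda)\,z^{\lambda_0}\gamma_1^{\lambda_1 z}\cdots\gamma_{s-1}^{\lambda_{s-1}z}$, read off via the relation above so that its values at integers are algebraic of bounded degree and height, vanishes together with its first $T-1$ derivatives at $z=1,2,\dots,S$. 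The monomial factor $z^{\lambda_0}$ (Feldman's device) is what will eventually linearise the dependence on the $c_j$.

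Then comes extrapolation. Being an exponential polynomial, $\Phi$ grows at a controlled rate on large discs; combining this with the vanishing already secured, a Schwarz-lemma / maximum-modulus estimate (or an interpolation-determinant argument) forces $\Phi$ and several derivatives to be very small at many further integers, and iterating this analytic step enlarges both the set of zeros and the vanishing order. At the end one meets a dichotomy: either some $\Phi^{(t)}(k)$ is nonzero, and then Liouville's inequality for a nonzero algebraic number of bounded degree and controlled height gives a lower bound for $|\Phi^{(t)}(k)|$ that contradicts the analytic upper bound once the parameters are correctly balanced; or all these values vanish, and then a zero (multiplicity) estimate for exponential polynomials --- equivalently a zero estimate on the relevant commutative algebraic group, in the spirit of Philippon and Masser--W\"ustholz --- is violated unless $\log\gamma_1,\dots,\log\gamma_s$ are $\Q$-linearly dependent with small coefficients, which together with the assumed smallness of $\Lambda$ contradicts $\gamma_1^{c_1}\cdots\gamma_s^{c_s}\ne1$. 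Optimising $L_i,S,T$ (balancing the Siegel height bound, the analytic growth, the number of extrapolation steps and the Liouville bound) yields $|\Lambda|>\exp\{-\cst{kappa:FLL}\,H_1\cdots H_s\log C\}$ with $\cst{kappa:FLL}$ depending only on $s$ and $D$.

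I expect the two genuinely hard points to be: (i) arranging the extrapolation and the parameter choice so that the coefficients enter only through $\log C$ rather than a power of $C$ --- this is exactly where Feldman's polynomials and a careful count of extrapolation steps are indispensable --- and (ii) the zero estimate in several variables, which is the deep commutative-algebra ingredient; the remainder is bookkeeping with heights and elementary complex analysis. In practice, and as the authors do, one simply quotes an already-published effective estimate of this shape (here Theorem~9.1 of \cite{GL326}) rather than reproving the whole apparatus.
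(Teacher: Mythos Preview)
Your sketch is a reasonable outline of the Gel'fond--Baker machinery, but the paper does not prove this proposition at all: it is stated without proof in \S\ref{SS:outilsdiophantiens} as a direct quotation of Theorem~9.1 of \cite{GL326} (via Corollary~9 of \cite{LW3}), used purely as a black box. You yourself note this in your final sentence, and that observation is really the whole ``proof'' as far as the present paper is concerned; the preceding three paragraphs, while broadly accurate as a description of how such estimates are obtained, are superfluous here and would in any case need substantial expansion (the zero estimate and the parameter optimisation are each the subject of entire chapters in \cite{GL326}) before they constituted an actual proof.
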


\section{Introduction of the parameters $\Atilde$, $A$, $\Btilde$, $B$}\label{S:parametres}

From now on, we fix a solution $(x,y,\varepsilon)\in\Z^2\times\ZKtimes$ of the Thue inequation $| F_\varepsilon(x,y)|\le m$ with $xy\not=0$ 
and $\Q(\alpha\varepsilon)=K$. 
Up to \S\ref{S:varphinotsigmaa} inclusively, we suppose 
$$
1\le |x|\le |y|. 
$$
Let 
 $$
 \Atilde=\max\bigl\{1, \rmh(\alpha\varepsilon)\bigr\}. 
 $$
Write 
$$
\varepsilon=\zeta \epsilon_1^{a_1}\cdots\epsilon_r^{a_r}
$$
with $\zeta\in K^\times_{\tors}$ and $a_i\in\Z$ for $1\le i\le r$ and define 
$$
A=\max\{1,|a_1|, \dots,|a_r|\}.
$$
Thanks to {\rm{($\ref{Equation:EstimationsTriviales}$)}} and to Lemma $\ref{Lemme:PlongementAdapte}$, we have
$$
 \Newcst{minhauteuralphaepsilon} A\le \Atilde\le \Newcst{majhauteuralphaepsilon} A.
$$ 
\indent Next define
$$
\Btilde=\max\{1,\rmh(\beta)\}.
 $$
Since $|F_\varepsilon(x,y)|\le m$, it follows from {\rm{($\ref{Equation:lemmeA.15ST}$)}} and {\rm{($\ref{normebeta}$)}}
that there exists $\rho \in\ZK$ verifying 
\begin{equation}\label{equation:hauteurrho}
\rmh(\rho )\le \cst{ell} \log m
\end{equation}
with $\Newcst{ell}>0$ such that 
$\eta=\beta/\rho $ is 
a unit of $\ZK$ of the form 
 $$
\eta= \epsilon_1^{b_1}\cdots\epsilon_r^{b_r}
$$
with rational integers $b_1,\ldots,b_r$; define 
$$
B= \max\bigl\{1,\; |b_1|, |b_2|\ldots, \; |b_r|\bigr\}.
$$
Because of the relation $\beta=\rho \eta$, we deduce from {\rm{($\ref{Equation:EstimationsTriviales}$)}},
$$
 \Btilde\le \Newcst{majBtilde} (B +\log m)
$$
and from Lemma $\ref{Lemme:PlongementAdapte}$,
$$
B\le 
\Newcst{mminBtilde}(\Btilde +\log m).
$$
 
Since 
$xy\not=0$ and 
$ \Q(\alpha\varepsilon)=K$, 
we deduce that for $\varphi$ and $\sigma$ in $\Phi$, we have 
$$
\varphi=\sigma\;
\Longleftrightarrow \;
\varphi(\alpha\varepsilon)=\sigma(\alpha\varepsilon)\;
\Longleftrightarrow \;
\varphi(\beta)=\sigma(\beta)\;
\Longleftrightarrow \;
\sigma(\alpha\varepsilon)\varphi(\beta)=\sigma(\beta)\varphi(\alpha\varepsilon).
$$

 Here is an example of application of Proposition $\ref{Proposition:FormeLineaireLogarithmes}$.
The following lemma will be used 
in the proof of Lemma 
$\ref{Lemme:varphinotsigmaa}$. 

\begin{lemme}\label{Lemme:strategieA}
There exists an effectively computable positive constant $\Newcst{kappa:lemme:StrategieA}$ 
with the following property. 
Let $\varphi_1,\varphi_2,\varphi_3,\varphi_4$ be elements of $\Phi$ with $\varphi_1(\alpha\varepsilon)\varphi_2(\beta) \not=\varphi_3(\alpha\varepsilon)\varphi_4(\beta)$. Then 
$$
\left|
 \frac{\varphi_1(\alpha\varepsilon)\varphi_2(\beta) }
 {\varphi_3(\alpha\varepsilon)\varphi_4(\beta)} 
 -1\right|\ge 
 \exp\left\{
- \cst{kappa:lemme:StrategieA}
 (\log m)\log
 \left(2+
 \frac{A+B}{\log m}\right)
 \right\}.
$$
\end{lemme}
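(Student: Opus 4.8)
The quantity in question is a ratio of the shape $\gamma_1^{c_1}\cdots\gamma_s^{c_s}$, so the natural route is to massage it into the exact hypotheses of Proposition~$\ref{Proposition:FormeLineaireLogarithmes}$ and read off the conclusion. First I would expand $\alpha\varepsilon$ and $\beta$ in terms of the fixed data: write $\varepsilon=\zeta\,\epsilon_1^{a_1}\cdots\epsilon_r^{a_r}$ and $\beta=\rho\,\epsilon_1^{b_1}\cdots\epsilon_r^{b_r}$ as in \S$\ref{S:parametres}$. Then, for each $\varphi_i$, $\varphi_i(\alpha\varepsilon)=\varphi_i(\alpha)\varphi_i(\zeta)\prod_k\varphi_i(\epsilon_k)^{a_k}$ and $\varphi_i(\beta)=\varphi_i(\rho)\prod_k\varphi_i(\epsilon_k)^{b_k}$. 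Substituting into
$$
\frac{\varphi_1(\alpha\varepsilon)\varphi_2(\beta)}{\varphi_3(\alpha\varepsilon)\varphi_4(\beta)}
$$
the torsion factors $\varphi_1(\zeta)/\varphi_3(\zeta)$ are roots of unity, and every $\epsilon_k$ contributes a factor $\bigl(\varphi_1(\epsilon_k)\varphi_2(\epsilon_k)\varphi_3(\epsilon_k)^{-1}\varphi_4(\epsilon_k)^{-1}\bigr)^{?}$ with exponent $a_k$ (from the $\alpha\varepsilon$ pair) and $b_k$ (from the $\beta$ pair). Collecting everything, the ratio becomes
$$
\theta\cdot\prod_{k=1}^r u_k^{\,a_k}\,v_k^{\,b_k}
$$
where $\theta=\dfrac{\varphi_1(\alpha)\varphi_1(\zeta)\varphi_2(\rho)}{\varphi_3(\alpha)\varphi_3(\zeta)\varphi_4(\rho)}$ and the $u_k,v_k$ are fixed nonzero algebraic numbers built from conjugates of the $\epsilon_k$ (hence of bounded height, depending only on $\alpha$). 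Finally I would absorb $\theta$ into one more base with exponent $1$; note that by $(\ref{equation:hauteurrho})$ we have $\rmh(\rho)\le\cst{ell}\log m$, so $\rmh(\theta)\le\kappa\log m$ for a suitable constant, and this is the one base whose height genuinely grows.

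Now set $s=2r+1$, $D=d$ (all these algebraic numbers lie in $K$, or at worst in a fixed field containing the conjugates of $\alpha$ and the $\epsilon_k$; one can also simply work with the normal closure, whose degree is bounded in terms of $d$). Take the height parameters to be $H_k=\kappa$ for the $u_k,v_k$ bases (a fixed constant $\ge\rmh(u_k),\rmh(v_k)$) and $H_s=\max\{\kappa,\,\cst{ell}\log m,\,2\}$ for the $\theta$ base, which is legitimately the largest since $m\ge2$. The exponent bound: $|a_k|\le A$, $|b_k|\le B$, and the exponent of $\theta$ is $1$. So we may take
$$
C=\max\Bigl\{2,\ \frac{A+B}{H_s}\Bigr\}\ \le\ \kappa\Bigl(2+\frac{A+B}{\log m}\Bigr),
$$
using $H_s\ge\kappa_0\log m$ for a fixed $\kappa_0>0$ (recall $H_s\ge\cst{ell}\log m$ and also $H_s\ge$ constant, so comparably $H_s\asymp\log m$ up to the lower bound; if $\log m$ is small this is absorbed into the constant since then $H_s$ is bounded below by a constant anyway). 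The product $H_1\cdots H_s$ is then $\kappa^{2r}\cdot H_s\le\kappa'\log m$. The hypothesis $\gamma_1^{c_1}\cdots\gamma_s^{c_s}\neq1$ is exactly the assumption $\varphi_1(\alpha\varepsilon)\varphi_2(\beta)\neq\varphi_3(\alpha\varepsilon)\varphi_4(\beta)$. Proposition~$\ref{Proposition:FormeLineaireLogarithmes}$ then yields
$$
\left|\frac{\varphi_1(\alpha\varepsilon)\varphi_2(\beta)}{\varphi_3(\alpha\varepsilon)\varphi_4(\beta)}-1\right|
>\exp\{-\cst{kappa:FLL}\,H_1\cdots H_s\log C\}
\ge\exp\Bigl\{-\kappa''(\log m)\log\Bigl(2+\frac{A+B}{\log m}\Bigr)\Bigr\},
$$
which is the claimed bound with $\cst{kappa:lemme:StrategieA}=\kappa''$.

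The one genuinely delicate point — the rest is bookkeeping — is making sure the height data fed into Proposition~$\ref{Proposition:FormeLineaireLogarithmes}$ really do produce a clean $H_1\cdots H_s\log C$ of size $(\log m)\log(2+(A+B)/\log m)$, and in particular that the single $m$-dependent base $\theta$ carries height $O(\log m)$ while all other bases have height bounded purely in terms of $\alpha$. This rests on $(\ref{equation:hauteurrho})$ and on the fact that the $\epsilon_k$ (hence their conjugates and the fixed combinations $u_k,v_k$) have heights bounded in terms of $\rmh(\alpha)$ and $d$ via the effective Dirichlet theorem cited after $(\ref{normebeta})$. I would also double-check the normalization $C\ge\max_j(H_j/H_s)|c_j|$: since every $H_j\le H_s$, this is at most $\max_j|c_j|\le\max\{A,B,1\}$, and dividing through by the lower bound $H_s\gg\log m$ gives the stated $2+(A+B)/\log m$ shape after inflating constants. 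Choosing $H_s$ to also be $\ge2$ handles the edge case where $\log m$ and $A+B$ are both small, so that $C\ge2$ holds automatically and $\log C$ is controlled.
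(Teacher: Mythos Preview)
Your proposal is correct and follows essentially the same route as the paper: the paper writes the ratio as $\gamma_1^{c_1}\cdots\gamma_s^{c_s}$ with $s=2r+1$, taking $\gamma_j=\varphi_1(\epsilon_j)/\varphi_3(\epsilon_j)$ with $c_j=a_j$, $\gamma_{r+j}=\varphi_2(\epsilon_j)/\varphi_4(\epsilon_j)$ with $c_{r+j}=b_j$, and $\gamma_s=\varphi_1(\alpha\zeta)\varphi_2(\rho)/\bigl(\varphi_3(\alpha\zeta)\varphi_4(\rho)\bigr)$ with $c_s=1$, then sets $H_1=\cdots=H_{2r}$ equal to a constant, $H_s$ a constant times $\log m$, and $C=2+(A+B)/\log m$ --- exactly your $u_k$, $v_k$, $\theta$ and your height choices. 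Your added remarks about working in the normal closure and about the edge case $H_s\ge 2$ are harmless refinements.
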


\begin{proof}[Proof]
Write 
$$
 \frac{\varphi_1(\alpha\varepsilon)\varphi_2(\beta) }
 {\varphi_3(\alpha\varepsilon)\varphi_4(\beta)} 
$$
as $\gamma_1^{c_1}\cdots \gamma_s^{c_s}$ with $s=2r+1$, and 
$$ 
 \gamma_j=\frac{ \varphi_1(\epsilon_j)}{\varphi_3(\epsilon_j)},
\quad
c_j=a_j,
\quad
 \gamma_{r+j}=\frac{ \varphi_2(\epsilon_j)}{\varphi_4(\epsilon_j)},
\quad
c_{r+j}=b_j
 \quad (j=1,\dots, r), 
$$
$$
\gamma_s=\frac{\varphi_1(\alpha\zeta)\varphi_2(\rho)} {\varphi_3(\alpha\zeta)\varphi_4(\rho)},
\quad c_{s}=1.
$$ 
We have $\rmh(\gamma_s)\le \Newcst{hauteurgammas} \log m$, thanks to the upper bound {\rm{($\ref{equation:hauteurrho}$)}} for the height of $\rho$. 
Write
$$ 
 H_1=\cdots=H_{2r}=\Newcst{HiLemmeStrategie}, \quad H_s=\cst{HiLemmeStrategie} \log m, \quad C=2+ \frac{A+B }{\log m}\cdotp
$$
The hypothesis 
$$
\max_{1\le j\le s} \frac {H_j}{ H_s} {|c_j| }\le C 
$$ 
of Proposition $\ref{Proposition:FormeLineaireLogarithmes}$ is satisfied. Lemma $\ref{Lemme:strategieA}$ follows from this proposition.

\end{proof}

\section{Elimination}\label{S:elimination}

\subsection{Expressions of $x$ and $y$ in terms of $\alpha\varepsilon$ and $\beta$}\label{SS:xety}
 
Let $\varphi_1,\varphi_2$ be two distinct elements of $\Phi$, namely two distinct embeddings of $K$ into $\C$. We eliminate $x$ (resp{.} $y$) between the two equations 
$$
\varphi_1(\beta)=x-\varphi_1(\alpha\varepsilon)y \quad \hbox{and}\quad 
\varphi_2(\beta)=x-\varphi_2(\alpha\varepsilon)y, 
$$
 to obtain
\begin{equation}\label{Equation:y}
y=\frac{\varphi_1(\beta)-\varphi_2(\beta)}{\varphi_2(\alpha\varepsilon)-\varphi_1(\alpha\varepsilon)},
\quad
x=\frac{\varphi_2(\alpha\varepsilon)\varphi_1(\beta)-\varphi_1(\alpha\varepsilon)\varphi_2(\beta)}
{\varphi_2(\alpha\varepsilon)-\varphi_1(\alpha\varepsilon)}
\cdotp
\end{equation}

\subsection{The unit equation 
}\label{S:Strategie}

Let $\varphi_1,\varphi_2,\varphi_3$ be embeddings of $K$ into $\C$. Let 
$$
u_i=\varphi_i(\alpha\varepsilon),\quad 
v_i=\varphi_i(\beta)
\qquad (i=1,2,3).
$$
We eliminate $x$ and $y$ between the three equations 
$$
\left\{
\begin{array}{lll}
\varphi_1(\beta)&=&x-\varphi_1(\alpha\varepsilon)y\\ [1mm] 
\varphi_2(\beta)&=&x-\varphi_2(\alpha\varepsilon)y\\ [1mm]
\varphi_3(\beta)&=&x-\varphi_3(\alpha\varepsilon)y\\
\end{array}\right.
$$
by writing that the determinant of this nonhomogeneous system of three equations in two unknowns, which is equal to 
$$
\left|
\begin{matrix}
1& \varphi_1(\alpha\varepsilon) &\varphi_1(\beta)\\ 
1& \varphi_2(\alpha\varepsilon) &\varphi_2(\beta)\\ 
1& \varphi_3(\alpha\varepsilon) &\varphi_3(\beta)\\
\end{matrix}
\right|=
\left|
\begin{matrix}
1&u_1& v_1\\ 
1&u_2& v_2\\
1&u_3& v_3\\
\end{matrix}
\right|,
$$
is 0, and this leads to 
\begin{equation}\label{Equation:SommeSixTermes}
u_1v_2-u_1v_3+u_2v_3-u_2v_1+u_3v_1-u_3v_2=0.
\end{equation}

\section{Four sets of privileged embeddings} \label{S:QuatreEnsemblesPlongements}

We denote by $\sigma_a$ (resp{.} $\sigma_b$) an embedding of $K$ into $\C$ such that $|\sigma_a(\alpha\varepsilon)|$ (resp{.} $|\sigma_b(\beta)|$) be
 maximal among the elements $|\varphi(\alpha\varepsilon)|$ (resp{.} among the elements $|\varphi(\beta)|$) for $\varphi\in\Phi$. Therefore 
$$
|\sigma_a(\alpha\varepsilon)|=\house{\alpha\varepsilon}
\quad
\hbox{and}
\quad
|\sigma_b(\beta)|=\house{\beta}\; .
$$ 
Next we denote 
by $\tau_a$ (resp{.} $\tau_b$) an embedding of $K$ into $\C$ such that $|\tau_a(\alpha\varepsilon)|$ (resp{.} $|\tau_b(\beta)|$) be minimal
among the elements $|\varphi(\alpha\varepsilon)|$ (resp{.} among the elements $|\varphi(\beta)|$) for $\varphi\in\Phi$. Therefore 
$$
\left|\tau_a\left((\alpha\varepsilon)^{-1}\right)\right|=
\frac{1}{\house{(\alpha\varepsilon)}}
\quad
\hbox{and}
\quad
\left|\tau_b\left(\beta^{-1}\right)\right|=
\frac{1}{\house{\beta}} \cdotp
$$ 
Since there are at least three distinct embeddings of $K$ into $\C$, we may suppose $\tau_b\not=\sigma_b $ and 
 $\tau_a \not=\sigma_a$. By definition of $\sigma_a$, $\sigma_b$, $\tau_a$ and $\tau_b$, for any $\varphi\in \Phi $ we have 
 $$
|\tau_a(\alpha\varepsilon)|\le |\varphi(\alpha\varepsilon)|\le |\sigma_a(\alpha\varepsilon)|
\quad
\hbox{and}
\quad
|\tau_b(\beta)|\le |\varphi(\beta)|\le |\sigma_b(\beta)|.
$$

Let $\thetanu$ be a real number in the open interval 
$]0,1[$. 
Let us denote by $\Sigma_a(\thetanu)$, \, $\Sigma_b(\thetanu), $\, $T_a(\thetanu), $ \, $ T_b(\thetanu)$\,\, 
the sets of embeddings of $K$ into $\C$ defined by the following conditions: 
$$
\left\{
\begin{array}{lll}
\Sigma_a(\thetanu)&=&\left\{
\varphi\in\Phi\; 
\mid
|\sigma_a(\alpha\varepsilon)|^\thetanu \le |\varphi(\alpha\varepsilon)|\le |\sigma_a(\alpha\varepsilon)|
\right\},
\\ [2mm]
\Sigma_b(\thetanu)&=&\left\{
\varphi\in\Phi\; 
\mid
|\sigma_b(\beta)|^\thetanu \le |\varphi(\beta)|\le |\sigma_b(\beta)|
\right\},
 \\ [2mm]
T_a(\thetanu)&=&\left\{
\varphi\in\Phi\; 
\mid
|\tau_a(\alpha\varepsilon)|\le |\varphi(\alpha\varepsilon)|\le |\tau_a(\alpha\varepsilon)|^\thetanu 
\right\},
\\ [2mm]
T_b(\thetanu)&=&\left\{
\varphi\in\Phi\; 
\mid
|\tau_b(\beta)|\le |\varphi(\beta)|\le |\tau_b(\beta)|^\thetanu 
\right\}.
\end{array} \right.
$$
Of course, we have 
$$
\sigma_a\in\Sigma_a(\thetanu),\quad
\sigma_b\in\Sigma_b(\thetanu),\quad 
\tau_a\in T_a(\thetanu),\quad
\tau_b\in T_b(\thetanu).
$$

We will see in \S$\ref{S:minorationAetB}$ that we have 
$$
|\sigma_a(\alpha\varepsilon)|>2,\quad 
|\sigma_b(\beta)|>2,\quad 
|\tau_a(\alpha\varepsilon)|<\frac{1}{2},\quad
|\tau_b(\beta)|<\frac{1}{2}, 
$$
from which we will deduce 
$$
 T_a(\thetanu)\cap\Sigma_a(\thetanu) =\emptyset,
 \quad
 T_b(\thetanu)\cap\Sigma_b(\thetanu) =\emptyset. 
 $$

\section{Lower bounds for $A$ and $B$}\label{S:minorationAetB}

Thanks to Lemma 15 in \S7.2 of $\cite{LW3}$ and to Lemma 17 in \S7.3 of 
$\cite{LW3}$, we may suppose, without loss of generality, that $A$ and $B$ have a lower bound given by $\Newcst{kappa:minorA} \log m$ for
a sufficiently large effectively computable positive constant $\cst{kappa:minorA}$, depending only on $\alpha$:
\begin{equation}\label{equation:minorationAetB}
A\ge \cst{kappa:minorA} \log m,\quad
B\ge \cst{kappa:minorA} \log m.
\end{equation}
In particular, we deduce that $A$, $B$, $|\sigma_a(\alpha\varepsilon)|$ and 
$|\sigma_b(\beta)| $ are sufficiently large and also that $|\tau_a(\alpha\varepsilon)|$ and $|\tau_b(\beta)|$ are sufficiently small.\\

 By using Lemma $\ref{Lemme:PlongementAdapte}$ with the estimates {\rm{($\ref{Equation:EstimationsTriviales}$)}}, we deduce
that there exist some effectively computable positive constants 
$\Newcst{kappa:majsigma}$ et $\Newcst{kappa:minsigma}$, depending only on $\alpha$, 
such that 
\begin{equation}
 \label{Equation:MajMinsigma}
\left\{
\begin{array}{lllll}
 e^{\cst{kappa:minsigma} A}&\leq& 
|\sigma_a(\alpha\varepsilon)| &\leq& e^{\cst{kappa:majsigma} A},\\ [1mm]
e^{\cst{kappa:minsigma} B}
&\leq& |\sigma_b(\beta)| &\leq& e^{\cst{kappa:majsigma} B}, \\ [1mm]
e^{-\cst{kappa:majsigma} A}& \leq& |\tau_a(\alpha\varepsilon)| &\leq &
e^{- \cst{kappa:minsigma} A},
\\ [1mm]
e^{-\cst{kappa:majsigma} B} 
 &\leq& |\tau_b(\beta)|&\leq& e^{- \cst{kappa:minsigma} B}.\\
\end{array}\right.
\end{equation}
 Therefore we have 
$$ \left\{
\begin{array}{llllllll} 
e^{\cst{kappa:minsigma}\thetanu A}& \leq& |\varphi(\alpha\varepsilon)|&\leq& e^{\cst{kappa:majsigma} A}
& \hbox{for} & \varphi\in 
\Sigma_a(\thetanu), 
\\ [1mm]
e^{\cst{kappa:minsigma}\thetanu B} &\leq& |\varphi(\beta)|&\leq& e^{\cst{kappa:majsigma} B}
& \hbox{for} &\varphi\in 
\Sigma_b(\thetanu), 
 \\ [1mm] 
e^{-\cst{kappa:majsigma} A}
&\leq& |\varphi(\alpha\varepsilon)|&\leq& 
e^{-\cst{kappa:minsigma}\thetanu B}
& \hbox{for} &\varphi\in 
T_a(\thetanu),
\\ [1mm] 
e^{-\cst{kappa:majsigma} B}
&\leq& |\varphi(\beta)|&\leq& 
e^{-\cst{kappa:minsigma}\thetanu B}
& \hbox{for}& \varphi\in 
T_b(\thetanu).
\end{array}\right.
$$

\section{Upper bounds for $A$, $|x|$, $|y|$ in terms 
 of $B$ 
}
\label{S:MajorationAparB}

From the relation {\rm{($\ref{Equation:y}$)}} we deduce in an elementary way the following upper bounds. Recall the assumption $1\le |x|\le |y|$ made in \S$\ref{S:parametres}$. 

\begin{lemme}\label{Lemme:BestGrand}
One has 
$$
A\le \Newcst{AmajoreparB} 
B
\quad\hbox{
and
}\quad
 |x| \le |y| \le e^{ \Newcst{yinferieurakappaB} B}.
$$
\end{lemme}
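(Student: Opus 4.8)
The plan is to start from the two formulas in~\eqref{Equation:y} expressing $x$ and $y$ as ratios of differences of the quantities $\varphi_i(\beta)$ and $\varphi_i(\alpha\varepsilon)$, and to choose the two embeddings $\varphi_1,\varphi_2$ as cleverly as possible. Since $1\le|x|\le|y|$, it suffices to bound $|y|$, and then to bound $A$ via the relation $\Atilde\le\cst{majhauteuralphaepsilon}A$ together with $\Atilde=\max\{1,\rmh(\alpha\varepsilon)\}$.

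\textbf{Bounding $|y|$.} The numerator of the expression for $y$ in~\eqref{Equation:y} is $\varphi_1(\beta)-\varphi_2(\beta)$, which in absolute value is at most $2\house{\beta}=2|\sigma_b(\beta)|\le 2e^{\cst{kappa:majsigma}B}$ by~\eqref{Equation:MajMinsigma}. For the denominator, I would choose $\varphi_1=\sigma_a$ and $\varphi_2=\tau_a$, so that $\varphi_2(\alpha\varepsilon)-\varphi_1(\alpha\varepsilon)=\tau_a(\alpha\varepsilon)-\sigma_a(\alpha\varepsilon)$ has absolute value at least $|\sigma_a(\alpha\varepsilon)|-|\tau_a(\alpha\varepsilon)|\ge e^{\cst{kappa:minsigma}A}-e^{-\cst{kappa:minsigma}A}\ge \tfrac12 e^{\cst{kappa:minsigma}A}$ once $A\ge\cst{kappa:minorA}\log m$ is large enough (using~\eqref{equation:minorationAetB}). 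With this choice, numerator and denominator of the $y$-formula use $\varphi_1(\beta),\varphi_2(\beta),\varphi_1(\alpha\varepsilon),\varphi_2(\alpha\varepsilon)$, all of which are controlled; one gets $|y|\le 4e^{\cst{kappa:majsigma}B-\cst{kappa:minsigma}A}\le 4e^{\cst{kappa:majsigma}B}$, which gives the second assertion of the lemma with $\cst{yinferieurakappaB}$ a suitable multiple of $\cst{kappa:majsigma}$ (absorbing the constant $4$ into the exponent using $B\ge\cst{kappa:minorA}\log m\ge\cst{kappa:minorA}\log 2$).

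\textbf{Bounding $A$ by $B$.} For the inequality $A\le\cst{AmajoreparB}B$, the point is that $|y|\ge|x|\ge 1$ forces the denominator of the $y$-formula not to be too much larger than the numerator. Concretely, $1\le|y|$ gives $|\varphi_2(\alpha\varepsilon)-\varphi_1(\alpha\varepsilon)|\le|\varphi_1(\beta)-\varphi_2(\beta)|\le 2|\sigma_b(\beta)|\le 2e^{\cst{kappa:majsigma}B}$. With the same choice $\varphi_1=\sigma_a$, $\varphi_2=\tau_a$ as above, the left-hand side is $\ge\tfrac12 e^{\cst{kappa:minsigma}A}$, so $e^{\cst{kappa:minsigma}A}\le 4e^{\cst{kappa:majsigma}B}$, whence $A\le\cst{AmajoreparB}B$ with $\cst{AmajoreparB}$ essentially $\cst{kappa:majsigma}/\cst{kappa:minsigma}$, again absorbing the additive constant via the lower bound $B\ge\cst{kappa:minorA}\log m$.

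\textbf{Main obstacle.} There is nothing deep here — no diophantine argument is needed, which is why the lemma is described as following ``in an elementary way'' from~\eqref{Equation:y}. The only genuine care required is the triangle-inequality estimate $|\sigma_a(\alpha\varepsilon)-\tau_a(\alpha\varepsilon)|\ge\tfrac12|\sigma_a(\alpha\varepsilon)|$, which needs $|\tau_a(\alpha\varepsilon)|$ to be genuinely smaller than $|\sigma_a(\alpha\varepsilon)|$; this is exactly what~\eqref{equation:minorationAetB} and~\eqref{Equation:MajMinsigma} guarantee, since they make $|\sigma_a(\alpha\varepsilon)|>2$ and $|\tau_a(\alpha\varepsilon)|<\tfrac12$. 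One must also make sure $\sigma_a\ne\tau_a$, which holds because $d\ge 3$ and $\alpha\varepsilon$ is not rational, so $\alpha\varepsilon$ has at least two conjugates of distinct absolute value — or more simply because $\tau_a\ne\sigma_a$ was already arranged in \S\ref{S:QuatreEnsemblesPlongements}. The bookkeeping of which $\kappa$'s appear in $\cst{AmajoreparB}$ and $\cst{yinferieurakappaB}$ is routine.
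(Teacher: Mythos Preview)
Your proof is correct and follows essentially the same route as the paper: both choose $\varphi_1,\varphi_2$ to be $\sigma_a$ and $\tau_a$ in the formula~\eqref{Equation:y} for $y$, bound the numerator by $2|\sigma_b(\beta)|$ and the denominator from below by $\tfrac12|\sigma_a(\alpha\varepsilon)|$ via the triangle inequality, and then read off both $A\le\cst{AmajoreparB}B$ (from $|y|\ge 1$) and $\log|y|\le\cst{yinferieurakappaB}B$ (from $|\sigma_a(\alpha\varepsilon)|>2$) using~\eqref{Equation:MajMinsigma}. The only cosmetic difference is that the paper packages the key estimate as $|y\,\sigma_a(\alpha\varepsilon)|\le 4|\sigma_b(\beta)|$ and derives both conclusions from that single inequality, whereas you treat the two bounds in separate paragraphs.
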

 
\begin{proof}[Proof] There is no restriction in supposing that $A$ and $B$ are larger than a constant times $\log m$.
From the inequality $|\sigma_a(\alpha\varepsilon)|\ge 2 |\tau_a(\alpha\varepsilon)| $, we deduce
$$
|\sigma_a(\alpha\varepsilon)-\tau_a(\alpha\varepsilon)|\ge
\frac{1}{2}|\sigma_a(\alpha\varepsilon)|.
$$
Then we use {\rm{($\ref{Equation:y}$)}} with $\varphi_2=\sigma_a$ and $\varphi_1=\tau_a$:
$$
y\bigl(\sigma_a(\alpha\varepsilon)-\tau_a(\alpha\varepsilon)\bigr)=
\tau_a(\beta) -\sigma_a(\beta).
$$
From the upper bound 
$$
|\sigma_a(\beta)-\tau_a(\beta)|\le 2|\sigma_b(\beta)|,
$$
we deduce
\begin{equation}\label{equation:majysigmaa}
|y\sigma_a(\alpha\varepsilon)|\le 4|\sigma_b(\beta)|.
\end{equation}
With the help of {\rm{($\ref{Equation:MajMinsigma}$)}}, one obtains the inequalities 
$$
e^{\cst{kappa:minsigma} A}\le |\sigma_a(\alpha\varepsilon) |\le | y\sigma_a(\alpha\varepsilon) | \le 4|\sigma_b(\beta)|
\le 4 e^{\cst{kappa:majsigma} B} 
$$
which imply $A\le \cst{AmajoreparB} B$.
From {\rm{($\ref{equation:majysigmaa}$)}} and because $|\sigma_a(\alpha\varepsilon) |> 2$, we get the upper bound $\log |y| \le \cst{yinferieurakappaB} B$.
We can conclude the proof by using the hypothesis $|x|\le |y|$
(cf{.} \S$\ref{S:parametres}$). 
\end{proof}

\section{Upper bound of $B$ in terms of $A$ 
}\label{S:DeuxiemeArgumentDiophantien}

We use the unit equation {\rm{($\ref{Equation:SommeSixTermes}$)}} of \S$\ref{S:Strategie}$ with three different embeddings $\tau_b$, $\sigma_b$ and $\varphi$, where $\varphi$
is an element of $\Phi$ different from $\tau_b$ and $\sigma_b$.

\begin{lemme}\label{Lemme:BmajoreParA}
One has 
$$
B \le \Newcst{kappa:BmajoreParA}A . 
$$
\end{lemme}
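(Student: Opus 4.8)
The plan is to apply the unit equation (\ref{Equation:SommeSixTermes}) of \S\ref{S:Strategie} with the three pairwise distinct embeddings $\varphi_1=\tau_b$, $\varphi_2=\sigma_b$ and $\varphi_3=\varphi$, where $\varphi$ is any element of $\Phi\setminus\{\tau_b,\sigma_b\}$ (a nonempty set, since $d\ge 3$ and $\tau_b\ne\sigma_b$). Throughout, $\kappa$ denotes a generic effectively computable positive constant depending only on $\alpha$, whose value may change from line to line, and $\kappa_0$ a fixed such constant. Writing $u_i=\varphi_i(\alpha\varepsilon)$, $v_i=\varphi_i(\beta)$ and grouping by $v_1,v_2,v_3$, equation (\ref{Equation:SommeSixTermes}) reads
$$v_2(u_1-u_3)+v_3(u_2-u_1)+v_1(u_3-u_2)=0,$$
hence, dividing by $v_3(u_2-u_1)$,
$$\frac{v_2(u_1-u_3)}{v_3(u_2-u_1)}+1=\frac{v_1(u_2-u_3)}{v_3(u_2-u_1)}\cdotp$$
Since $v_1=\tau_b(\beta)$ is very small and $v_2=\sigma_b(\beta)=\house{\beta}$ is very large by (\ref{Equation:MajMinsigma}), the strategy is to bound the right-hand side from above by a quantity exponentially small in $B$, to recognise $-v_2(u_1-u_3)\big/\bigl(v_3(u_2-u_1)\bigr)$ as a product of powers of algebraic numbers of controlled height, and then to invoke Proposition \ref{Proposition:FormeLineaireLogarithmes}.

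We may assume $B\ge\kappa_0 A$ for a sufficiently large $\kappa_0$, since otherwise the conclusion already holds. Under this assumption, $u_1-u_3=\tau_b(\alpha\varepsilon)-\varphi(\alpha\varepsilon)$ and $u_2-u_1=\sigma_b(\alpha\varepsilon)-\tau_b(\alpha\varepsilon)$ are nonzero (distinct embeddings take distinct values on $\alpha\varepsilon$, as recalled in \S\ref{S:parametres}) algebraic numbers of height $\le\kappa A$ — because $\rmh(\alpha\varepsilon)\le\cst{majhauteuralphaepsilon}A$ — with denominators dividing the leading coefficient $a_0\le e^{\kappa A}$ of the minimal polynomial of $\alpha\varepsilon$; Liouville's inequality therefore gives $|u_1-u_3|\ge e^{-\kappa A}$ and $|u_2-u_1|\ge e^{-\kappa A}$. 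Since $y\in\Z\setminus\{0\}$, (\ref{Equation:MajMinsigma}) yields $|x/y-\tau_b(\alpha\varepsilon)|=|\tau_b(\beta)|/|y|\le e^{-\cst{kappa:minsigma}B}$; choosing $\kappa_0$ large enough that $e^{-\cst{kappa:minsigma}B}\le\tfrac12 e^{-\kappa A}$ in the regime $B\ge\kappa_0 A$, we obtain
$$|v_3|=|y|\cdot|x/y-\varphi(\alpha\varepsilon)|\ge|y|\bigl(|\tau_b(\alpha\varepsilon)-\varphi(\alpha\varepsilon)|-|x/y-\tau_b(\alpha\varepsilon)|\bigr)\ge\tfrac12 e^{-\kappa A}.$$
Feeding this, $|v_1|=|\tau_b(\beta)|\le e^{-\cst{kappa:minsigma}B}$ and $|u_2-u_3|\le 2\house{\alpha\varepsilon}\le 2e^{\cst{kappa:majsigma}A}$ (from (\ref{Equation:MajMinsigma})) into the second display gives
$$\left|\frac{v_2(u_1-u_3)}{v_3(u_2-u_1)}+1\right|\le e^{\kappa A-\cst{kappa:minsigma}B}.$$

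Now we express the left-hand quantity as a product of powers. From $\beta=\rho\eta$ with $\eta=\epsilon_1^{b_1}\cdots\epsilon_r^{b_r}$ one has $v_2/v_3=(\sigma_b(\rho)/\varphi(\rho))\prod_{j=1}^r(\sigma_b(\epsilon_j)/\varphi(\epsilon_j))^{b_j}$, whence
$$-\frac{v_2(u_1-u_3)}{v_3(u_2-u_1)}=\gamma_1^{b_1}\cdots\gamma_r^{b_r}\,\gamma_{r+1},$$
where $\gamma_j=\sigma_b(\epsilon_j)/\varphi(\epsilon_j)$ ($1\le j\le r$) has height bounded by a constant (the $\epsilon_j$ being fixed), and $\gamma_{r+1}=-(u_1-u_3)\sigma_b(\rho)\big/\bigl((u_2-u_1)\varphi(\rho)\bigr)$ has height $\le\kappa A$ by (\ref{equation:hauteurrho}), the bound $\rmh(\alpha\varepsilon)\le\cst{majhauteuralphaepsilon}A$ and (\ref{equation:minorationAetB}); all the $\gamma_j$ lie in the Galois closure of $K$ over $\Q$, hence generate a field of degree $\le d!$. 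This product is $\ne 1$: otherwise $v_2(u_1-u_3)=-v_3(u_2-u_1)$, and the unit equation in the first display forces $v_1(u_3-u_2)=0$, contradicting $\beta\ne 0$ and $\varphi\ne\sigma_b$. Proposition \ref{Proposition:FormeLineaireLogarithmes}, applied with $s=r+1$, $H_1=\cdots=H_r$ a constant, $H_s=\kappa A$ and $C=2+\kappa B/A$ (which majorises $2$ and each $(H_j/H_s)|c_j|$, since $|b_j|\le B$ while $|c_{r+1}|=1$), then gives
$$\left|\gamma_1^{b_1}\cdots\gamma_r^{b_r}\,\gamma_{r+1}-1\right|>\exp\{-\kappa A\log C\}.$$

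Comparing the last two displays gives $\cst{kappa:minsigma}B\le\kappa A(1+\log C)$. Setting $t=B/A$ and using $\log C\le\kappa+\log t$ together with the lower bound $A\ge\cst{kappa:minorA}\log m\ge\cst{kappa:minorA}\log 2$ of (\ref{equation:minorationAetB}) to absorb additive constants into a multiple of $A$, this becomes $\cst{kappa:minsigma}\,t-\kappa\log t\le\kappa$; since $\cst{kappa:minsigma}\,t-\kappa\log t\to+\infty$ as $t\to\infty$, the ratio $t=B/A$ is bounded by an effectively computable constant, which together with the case $B\le\kappa_0 A$ yields $B\le\cst{kappa:BmajoreParA}A$. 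The delicate point is the passage to a product of powers: the coefficients $u_1-u_3$ and $u_2-u_1$ occurring in the unit equation are differences of conjugates of $\alpha\varepsilon$, not monomials in the $\epsilon_j$, so one must estimate them trivially from above and by Liouville's inequality from below (both by $e^{\pm\kappa A}$) and absorb their quotient into the single extra factor $\gamma_{r+1}$ of height $O(A)$; this is exactly what makes the parameter $C$ in Proposition \ref{Proposition:FormeLineaireLogarithmes} comparable to $B/A$ rather than to $B$, so that the final comparison delivers the linear bound $B\le\cst{kappa:BmajoreParA}A$ and not merely something like $B\ll A\log A$.
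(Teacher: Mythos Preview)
Your proof is correct and follows essentially the same route as the paper: the same three embeddings $\tau_b,\sigma_b,\varphi$ in the unit equation, the same decomposition $\sigma_b(\beta)/\varphi(\beta)=\bigl(\sigma_b(\rho)/\varphi(\rho)\bigr)\prod(\sigma_b(\epsilon_j)/\varphi(\epsilon_j))^{b_j}$, and the same application of Proposition~\ref{Proposition:FormeLineaireLogarithmes} with $H_s\asymp A$ and $C\asymp 2+B/A$. The only cosmetic difference is which factor you divide by: the paper divides by $\sigma_b(\beta)\bigl(\varphi(\alpha\varepsilon)-\tau_b(\alpha\varepsilon)\bigr)$, so the large $|\sigma_b(\beta)|\ge e^{\cst{kappa:minsigma}B}$ sits directly in the denominator of the small side and the factor $\delta=\bigl(\varphi(\alpha\varepsilon)-\sigma_b(\alpha\varepsilon)\bigr)/\bigl(\varphi(\alpha\varepsilon)-\tau_b(\alpha\varepsilon)\bigr)$ is bounded by $e^{\kappa A}$ purely from its height, avoiding your detour through Liouville's inequality and the auxiliary lower bound $|\varphi(\beta)|\ge\tfrac12 e^{-\kappa A}$.
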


\begin{proof}[Proof]
Let $\varphi\in \Phi$ with $\varphi\not=\sigma_b$ and $\varphi\not=\tau_b$. We take advantage of the relation 
{\rm{($\ref{Equation:SommeSixTermes}$)}} with $\varphi_1=\sigma_b$, $\varphi_2=\varphi$, $\varphi_3 = \tau_b$,
 written in the form 
$$
 \varphi(\beta) 
 	\bigl(\sigma_b(\alpha\varepsilon)-\tau_b(\alpha\varepsilon)\bigr)
 -
\sigma_b(\beta) \bigl(\varphi(\alpha\varepsilon)-\tau_b(\alpha\varepsilon)\bigr)
+
\tau_b(\beta) \bigl(\varphi(\alpha\varepsilon)- \sigma_b(\alpha\varepsilon)\bigr)
=0
$$
and we divide by $\sigma_b(\beta) \bigl(\varphi(\alpha\varepsilon)- \tau_b(\alpha\varepsilon)\bigr)$ (which is different from 0): 
\begin{equation}\label{Equation:fllMajorationBparA}
\frac{ \varphi(\beta) }
{\sigma_b(\beta) }
 \cdot \frac{\sigma_b(\alpha\varepsilon)-\tau_b(\alpha\varepsilon)} { \varphi(\alpha\varepsilon)- \tau_b(\alpha\varepsilon)}
-1=
-
\frac{\tau_b(\beta)}
{\sigma_b(\beta) }
 \cdot \frac{ \varphi(\alpha\varepsilon)- \sigma_b(\alpha\varepsilon)} { \varphi(\alpha\varepsilon)- \tau_b(\alpha\varepsilon)}\cdotp
\end{equation}
The right side of {\rm{($\ref{Equation:fllMajorationBparA}$)}} is different from 0. Let us show that an upper bound of its modulus is given by 
$$
e^{\Newcst{ExpmoinskappaA}A} e^{-\Newcst{ExpmoinskappaB}B}.
$$
As a matter of fact, on the one hand, from {\rm{($\ref{Equation:MajMinsigma}$)}} we have 
$$
|\tau_b(\beta)| \le 
e^{- \cst{kappa:minsigma} B}, 
\quad \hbox{and}\quad |\sigma_b(\beta) |\ge e^{\cst{kappa:minsigma} B}; 
$$
on the other hand, the height of the number 
$$
\delta=
\frac{ \varphi(\alpha\varepsilon)- \sigma_b(\alpha\varepsilon)} { \varphi(\alpha\varepsilon)- \tau_b(\alpha\varepsilon)}
$$
 is bounded from above by $e^{\Newcst{kappa:majhauteurdeuxiemeargt} A}$. From this upper bound for the height we derive the upper bound for the modulus $|\delta|$, namely $|\delta|\le e^{\Newcst{kappa:minrdeuxiemeargt} A}$, hence
 $$
\left|
\frac{\tau_b(\beta)}
{\sigma_b(\beta) }
 \cdot \delta\right|
 \le 
 \frac{e^{\cst{kappa:minrdeuxiemeargt} A}}
{e^{2 \cst{kappa:minsigma} B}}\cdotp
 $$ 
Let us write the term 
$$
\frac{ \varphi(\beta) }
{\sigma_b(\beta) }
 \cdot \frac{\sigma_b(\alpha\varepsilon)-\tau_b(\alpha\varepsilon)} { \varphi(\alpha\varepsilon)- \tau_b(\alpha\varepsilon)}
$$
 appearing on the left side of {\rm{($\ref{Equation:fllMajorationBparA}$)}} in the form $\gamma_1^{c_1}\cdots \gamma_s^{c_s}$ with $s=r+1$ and 
$$ 
 \gamma_j=\frac{ \varphi(\epsilon_j)}{\sigma_b(\epsilon_j)},
\quad
c_j=b_j \quad (j=1,\dots, r), 
$$
$$
\gamma_s=\frac{\sigma_b(\alpha\varepsilon)-\tau_b(\alpha\varepsilon)} { \varphi(\alpha\varepsilon)- \tau_b(\alpha\varepsilon)}
\cdot
\frac{\varphi(\rho )}{\sigma_b(\rho )},
\quad c_s=1.
$$ 
Thanks to {\rm{($\ref{equation:hauteurrho}$)}} and {\rm{($\ref{equation:minorationAetB}$)}}, we have 
$$
\rmh(\gamma_s)\le
 \Newcst{hauteurgamma0} A +2\rmh(\varrho)\le
 \Newcst{hauteurgamma0bis} A.
$$
Define 
$$ 
 H_1=\cdots=H_{r}=\Newcst{MinorationAparB}, \quad H_s=\cst{hauteurgamma0} A, \quad C=2+\frac{B }{\Newcst{kappa:majAparB}A}\cdotp
$$
We check that the hypothesis 
$$
\max_{1\le j\le s} \frac {H_j}{ H_s} {|c_j| }\le C
$$ 
of Proposition $\ref{Proposition:FormeLineaireLogarithmes}$ is satisfied. We deduce from this proposition that a lower bound for the modulus of the left member of {\rm{($\ref{Equation:fllMajorationBparA}$)}} is given by $\exp\{-\Newcst{AlogB} H_s\log C \}$. Consequently,
 $$
 \cst{ExpmoinskappaB}B \le \cst{ExpmoinskappaA}A+ \cst{AlogB} H_s\log C .
 $$
 Hence $C\le \Newcst{MajfinaleB}\log C $, which allows to conclude that $C\le \Newcst{kappa:Cmajore}$, and this secures the inequality 
 $B\le \cst{kappa:BmajoreParA}A$ we wanted to prove. 
 \end{proof}

\section{ Unicity of $\tau_b$ 
}\label{S:UniciteTaub}

We want to prove that no other embedding plays the same role as $\tau_b$. This will be achieved by proving the next lemma, which exhibits a contradiction to {\rm{($\ref{equation:minorationAetB}$)}}. 

\begin{lemme}\label{Lemme:UniciteTau} 
Suppose $T_b(\thetanu)\neq \{\tau_b\}$. 
 Then $ B \le \Newcst{UniciteTau} \log m$. 
\end{lemme}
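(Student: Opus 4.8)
The plan is to show that the mere presence of a \emph{second} embedding in $T_b(\nu)$ already pins $B$ down to size $\log m$. Throughout, by (\ref{equation:minorationAetB}) we may assume $B\ge\cst{kappa:minorA}\log m$ with $\cst{kappa:minorA}$ as large as convenient, so the ``for $B$ large'' caveats below are harmless. Suppose then $\varphi_0\in T_b(\nu)$ with $\varphi_0\ne\tau_b$. By definition of $T_b(\nu)$ and (\ref{Equation:MajMinsigma}), and since $|\tau_b(\beta)|\le e^{-\cst{kappa:minsigma}B}<1$, we have $|\tau_b(\beta)|\le|\tau_b(\beta)|^{\nu}$ and $|\varphi_0(\beta)|\le|\tau_b(\beta)|^{\nu}\le e^{-\cst{kappa:minsigma}\nu B}\le\tfrac12$. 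Subtracting the relations $\tau_b(\beta)=x-\tau_b(\alpha\varepsilon)y$ and $\varphi_0(\beta)=x-\varphi_0(\alpha\varepsilon)y$ I obtain
$$
\frac{\tau_b(\alpha\varepsilon)}{\varphi_0(\alpha\varepsilon)}-1
=\frac{y\bigl(\tau_b(\alpha\varepsilon)-\varphi_0(\alpha\varepsilon)\bigr)}{y\,\varphi_0(\alpha\varepsilon)}
=\frac{\varphi_0(\beta)-\tau_b(\beta)}{x-\varphi_0(\beta)},
$$
and hence, using $|x|\ge1$ (recall $xy\ne0$),
$$
\Bigl|\frac{\tau_b(\alpha\varepsilon)}{\varphi_0(\alpha\varepsilon)}-1\Bigr|
\le\frac{|\tau_b(\beta)|+|\varphi_0(\beta)|}{\,|x|-|\varphi_0(\beta)|\,}
\le\frac{2\,|\tau_b(\beta)|^{\nu}}{1-|\tau_b(\beta)|^{\nu}}\le4\,e^{-\cst{kappa:minsigma}\nu B}.
$$
The crucial feature is that the denominator $x-\varphi_0(\beta)=\varphi_0(\alpha\varepsilon)y$ is bounded below by $\tfrac12$ \emph{even though $|\varphi_0(\alpha\varepsilon)|$ itself may be as small as $\house{\alpha\varepsilon}^{-1}$}; this is exactly what keeps the diophantine lower bound below merely polynomial in $A$ rather than exponential.

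For that lower bound, write $\tau_b(\alpha\varepsilon)/\varphi_0(\alpha\varepsilon)=\gamma_1^{c_1}\cdots\gamma_s^{c_s}$ with $s=r+1$, using $\alpha\varepsilon=\alpha\zeta\,\epsilon_1^{a_1}\cdots\epsilon_r^{a_r}$:
$$
\gamma_j=\frac{\tau_b(\epsilon_j)}{\varphi_0(\epsilon_j)},\quad c_j=a_j\ (1\le j\le r),\qquad
\gamma_s=\frac{\tau_b(\alpha\zeta)}{\varphi_0(\alpha\zeta)},\quad c_s=1.
$$
Each $\gamma_j$ has height at most a constant $\kappa$ depending only on $\alpha$, since $\rmh(\gamma_j)\le2\rmh(\epsilon_j)$ for $j\le r$ and $\rmh(\gamma_s)\le2\rmh(\alpha\zeta)=2\rmh(\alpha)$; and all the $\gamma_j$ lie in the Galois closure of $K/\Q$, of degree $\le D:=d!$. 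The product is $\ne1$ because $\Q(\alpha\varepsilon)=K$ makes the conjugates of $\alpha\varepsilon$ pairwise distinct (cf.\ \S\ref{S:parametres}) and $\varphi_0\ne\tau_b$. Since $\max_j|c_j|=A$, Proposition \ref{Proposition:FormeLineaireLogarithmes} with $H_1=\cdots=H_s=\kappa$ and $C=A$ gives
$$
\Bigl|\frac{\tau_b(\alpha\varepsilon)}{\varphi_0(\alpha\varepsilon)}-1\Bigr|>\exp\{-\cst{kappa:FLL}\kappa^{r+1}\log A\}=A^{-\kappa'},
$$
with $\kappa'=\cst{kappa:FLL}\kappa^{r+1}$ depending only on $\alpha$.

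Comparing the two estimates yields $A^{-\kappa'}<4\,e^{-\cst{kappa:minsigma}\nu B}$, i.e.\ $\cst{kappa:minsigma}\nu B<\log4+\kappa'\log A$. By Lemma \ref{Lemme:BestGrand}, $A\le\cst{AmajoreparB}B$, so $\cst{kappa:minsigma}\nu B<\log4+\kappa'\log\cst{AmajoreparB}+\kappa'\log B$. An inequality of the form $B<\kappa''+\kappa'''\log B$, with $\kappa'',\kappa'''$ depending only on $\alpha$ and $\nu$, bounds $B$ by an effectively computable constant depending only on $\alpha$ and $\nu$; since $m\ge2$, this is $\le\cst{UniciteTau}\log m$, which proves the lemma. (Then, choosing $\cst{kappa:minorA}$ in (\ref{equation:minorationAetB}) larger than $\cst{UniciteTau}$, this contradicts $B\ge\cst{kappa:minorA}\log m$, so in fact $T_b(\nu)=\{\tau_b\}$.) The one point to isolate is the observation in the first paragraph that $x-\varphi_0(\beta)$, not $\varphi_0(\alpha\varepsilon)$, is the quantity to bound from below; granting that, the rest is a direct application of the diophantine tool together with Lemma \ref{Lemme:BestGrand}.
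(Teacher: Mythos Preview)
Your proof is correct and follows essentially the same approach as the paper's: form the ratio of two conjugates of $\alpha\varepsilon$ (you use $\tau_b(\alpha\varepsilon)/\varphi_0(\alpha\varepsilon)$, the paper uses the reciprocal $\varphi(\alpha\varepsilon)/\tau_b(\alpha\varepsilon)$), bound $|{\cdot}-1|$ above by $4e^{-\cst{kappa:minsigma}\nu B}$ using that the relevant denominator $y\cdot(\text{conjugate of }\alpha\varepsilon)$ has modulus $\ge\tfrac12$ via $|x|\ge1$ and $|\text{small conjugate of }\beta|<\tfrac12$, apply Proposition~\ref{Proposition:FormeLineaireLogarithmes} with all heights $O(1)$ and $C=A$ to get $B\le\kappa\log A$, and close with Lemma~\ref{Lemme:BestGrand}. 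The only cosmetic difference is which of the two small embeddings you put in the denominator; the estimates are the same either way.
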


\begin{proof}[Proof]
Let $\varphi\in T_b(\thetanu)$. Suppose $\varphi\not=\tau_b$.
Let us use 
 {\rm{($\ref{Equation:y}$)}} with $\varphi_1=\varphi$, $\varphi_2=\tau_b$, in the form 
 $$
 \frac{\varphi(\alpha\varepsilon)}{\tau_b(\alpha\varepsilon)}-1
 =\frac{\tau_b(\beta)- \varphi(\beta)}{y \tau_b(\alpha\varepsilon)}\cdotp
$$ 
From the inequality
$$
|x-\tau_b(\alpha\varepsilon) y| = 
|\tau_b(\beta)| 
 <\frac{1}{2}
$$
obtained from {\rm{($\ref{Equation:MajMinsigma}$)}}, 
we deduce 
$$
|\tau_b(\alpha\varepsilon) y|\ge |x|-\frac{1}{2}\ge \frac{1}{2}\cdotp
$$
Since $|\tau_b(\beta)|\le |\varphi(\beta)|$, we also have
$$
|\varphi(\alpha\varepsilon) -\tau_b(\alpha\varepsilon) | 
=
\frac{1 }{y} |\varphi(\beta) -\tau_b(\beta)
|\le 
\frac{2|\varphi(\beta)|}{|y|}\cdotp
$$
Consequently, 
$$
\left|
\frac{\varphi(\alpha\varepsilon) }{\tau_b(\alpha\varepsilon) }-1
\right|
\le 
\frac{
2 |\varphi(\beta) |}
{|\tau_b(\alpha\varepsilon) y|} 
\le 4 |\varphi(\beta) |
 \le 4 e^{- \cst{kappa:minsigma} \thetanu B}. 
$$ 
The left side is not 0 since $\varphi\not=\tau_b$. Let us write 
$$
\frac{\varphi(\alpha\varepsilon) }{\tau_b(\alpha\varepsilon) }=\gamma_1^{c_1}\cdots \gamma_s^{c_s}
$$
with 
$s=r+1$, and 
$$
\gamma_i=\frac{\varphi(\epsilon_i)}{ \tau_b(\epsilon_i)}, 
\quad
c_i=a_i, 
\quad (i=1,\dots,r), 
\quad 
\gamma_s= \frac{\varphi(\alpha\zeta)}{\tau_b(\alpha\zeta)},
\quad
c_s=1 . 
$$
From Proposition $\ref{Proposition:FormeLineaireLogarithmes}$ with 
$$
H_1=\cdots=H_s=\Newcst{Hi},\quad C=A,
$$
we deduce $B\le \Newcst{kappa:BmajoreParAbis} \log A $. Then we use the upper bound $A\le \cst{AmajoreparB} 
B $ of Lemma $\ref{Lemme:BestGrand}$ to get $B\le \Newcst{kappa:BmajoreParLogB} \log B $ and 
$A\le \Newcst{kappa:AmajoreParLogA} \log A $. We use 
 {\rm{($\ref{Equation:MajMinsigma}$)}}
to conclude the proof of Lemma $\ref{Lemme:UniciteTau}$.
\end{proof}

Therefore Lemma 
$\ref{Lemme:UniciteTau}$ now allows us to suppose that 
for any $\varphi\in \Phi$ different from $\tau_b$, 
we have $|\varphi(\beta)|>|\tau_b(\beta)|^{\thetanu}$.
In particular, the embedding $\tau_b$ is then real. This is the end of the proof in the totally imaginary case, cf. $\cite{LW3}$.\\

From now on, we suppose $T_b(\thetanu) = \{ \tau_b\}$.

\section{ Upper bound for $|\tau_b(\alpha\varepsilon)|$ 
}\label{S:Evaluation}

An upper bound for $|\tau_b(\alpha\varepsilon)|$
is exhibited.

\begin{lemme}\label{Lemme:taubalphaetsigmaabeta}
One has 
$
|\tau_b(\alpha\varepsilon)| \le 2.
$
 
\end{lemme}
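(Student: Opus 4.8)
The statement $|\tau_b(\alpha\varepsilon)| \le 2$ will be proved by contradiction: assume $|\tau_b(\alpha\varepsilon)| > 2$, and derive that $B$ (equivalently $A$, via Lemma~$\ref{Lemme:BmajoreParA}$ and Lemma~$\ref{Lemme:BestGrand}$) is bounded by a constant times $\log m$, contradicting~$(\ref{equation:minorationAetB})$. The mechanism should be a diophantine argument applied to a suitable unit relation, taking advantage of the extra information now available: we are in the case $T_b(\thetanu)=\{\tau_b\}$, so every $\varphi\neq\tau_b$ satisfies $|\varphi(\beta)|>|\tau_b(\beta)|^{\thetanu}=|\sigma_b(\beta)|^{-\thetanu}$ roughly, and in particular $|\varphi(\beta)|$ is not too small for $\varphi\neq\tau_b$.

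First I would recall the identity $\tau_b(\beta)=x-\tau_b(\alpha\varepsilon)y$ and, using $1\le |x|\le |y|$ together with $|\tau_b(\beta)|<1/2$ from $(\ref{Equation:MajMinsigma})$, note that $|x-\tau_b(\alpha\varepsilon)y|<1/2$; so if $|\tau_b(\alpha\varepsilon)|>2$ were true, then $x$ and $\tau_b(\alpha\varepsilon)y$ would be extremely close relative to $|y|$, giving a good rational approximation $|x/y - \tau_b(\alpha\varepsilon)|<1/(2|y|)$ with $|\tau_b(\alpha\varepsilon)|>2>1$. Combined with $|x|\le|y|$ this forces $|x/y|$ close to a number of modulus $>2$ while $|x/y|\le 1$, which is already contradictory — so actually the naive version may be too quick and one must be careful: the real content is that $\tau_b(\alpha\varepsilon)$ could be complex, so $|x/y|$ close to $|\tau_b(\alpha\varepsilon)|$ is what we get, and $|x/y|\le 1<2<|\tau_b(\alpha\varepsilon)|$ is an outright contradiction unless $y$ is small. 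Thus the honest route is: from $|x-\tau_b(\alpha\varepsilon)y|<1/2$ and $|\tau_b(\alpha\varepsilon)|>2$ we get $|y|\cdot(|\tau_b(\alpha\varepsilon)|-1)\le |x|-|\tau_b(\alpha\varepsilon)y|+|y|\le \dots$, i.e. essentially $|y|<1$ after using $|x|\le|y|$, forcing $y=0$, contradicting $xy\neq 0$. I would check this elementary inequality chain carefully — that is the cleanest argument if it closes.

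If that elementary argument does not quite close (e.g. because $|\tau_b(\alpha\varepsilon)|$ slightly exceeding $2$ is not enough slack against the $1/2$ and the $|x|\le|y|$ comparison), the fallback is the diophantine route: pick $\varphi\neq\tau_b$, use $(\ref{Equation:y})$ with $\varphi_1=\varphi,\varphi_2=\tau_b$ in the form $\dfrac{\varphi(\alpha\varepsilon)-\tau_b(\alpha\varepsilon)}{?}-1 = \cdots$ exactly as in the proof of Lemma~$\ref{Lemme:UniciteTau}$, but now exploiting $|\tau_b(\alpha\varepsilon)y|\ge \tfrac12(|\tau_b(\alpha\varepsilon)|-1)\,$-type lower bounds that become effective once $|\tau_b(\alpha\varepsilon)|>2$, to bound a linear form in logarithms from above by something like $e^{-\cst{kappa:minsigma}\thetanu B}$ and from below by $\exp\{-\kappa(\log m)\log(2+(A+B)/\log m)\}$ via Lemma~$\ref{Lemme:strategieA}$ or Proposition~$\ref{Proposition:FormeLineaireLogarithmes}$; combined with $A\le\cst{AmajoreparB}B$ and $B\le\cst{kappa:BmajoreParA}A$ this yields $B\le\kappa\log m$, contradicting $(\ref{equation:minorationAetB})$.

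The main obstacle I anticipate is pinning down precisely which inequality does the work: whether the purely elementary estimate from $|x-\tau_b(\alpha\varepsilon)y|<1/2$, $1\le|x|\le|y|$ suffices, or whether one genuinely needs the hypothesis $T_b(\thetanu)=\{\tau_b\}$ (i.e. that $\tau_b(\beta)$ is much smaller than all other $|\varphi(\beta)|$) to feed a linear-forms-in-logarithms argument. My expectation is that the elementary route works here — the constant $2$ is chosen exactly so that $|\tau_b(\alpha\varepsilon)|>2$ together with $|x|\le|y|$ and $|\tau_b(\beta)|<1/2$ is self-contradictory — and the role of the section heading is just to record this threshold for use in the subsequent section on the uniqueness of $\sigma_a$. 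So the bulk of the write-up is the short inequality manipulation with $\beta=x-\alpha\varepsilon y$, and the only care needed is handling the complex absolute values correctly.
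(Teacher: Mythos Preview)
Your elementary argument is correct and is exactly the paper's proof: from $|x-\tau_b(\alpha\varepsilon)y|=|\tau_b(\beta)|<\tfrac12<|x|$ (using $|x|\ge 1$) one gets $|\tau_b(\alpha\varepsilon)y|\le |x|+\tfrac12<2|x|\le 2|y|$, hence $|\tau_b(\alpha\varepsilon)|\le 2$. The diophantine fallback and the hypothesis $T_b(\thetanu)=\{\tau_b\}$ are not needed here, so you can drop all the hedging.
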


\begin{proof}[Proof] 
We have 
$$
|x-\tau_b(\alpha\varepsilon) y| =|\tau_b(\beta)| < \frac{1}{2} < |x|,
$$
wherupon we deduce 
$$ 
|\tau_b(\alpha\varepsilon) y|\le 2|x|\le 2|y|,
$$
since $|x|\le |y|$. 
 
\end{proof}

\section{Unicity of $\sigma_a$ 
}
\label{S:varphinotsigmaa}

Since $|\tau_b(\beta)|$ is very small, $x$ is close to $\tau_b(\alpha\varepsilon)y$. Now, for any $\varphi\in\Phi$, we have 
 $$\varphi(\beta)=x-\varphi(\alpha\varepsilon) y.
$$
Consequently, if $|\varphi(\alpha\varepsilon)|$ is smaller than $|\tau_b(\alpha\varepsilon)|$, then $\varphi(\beta)$ is close to $x$, while 
if $|\varphi(\alpha\varepsilon)|$ is larger than $|\tau_b(\alpha\varepsilon)|$, then $\varphi(\beta)$ is close to $-\varphi(\alpha\varepsilon)y$. 
Let us justifty these claims. 

\begin{lemme}\label{Lemme:varphibeta}
Let $\varphi\in\Phi$. \\
\indent 
 {\rm(a)} Let $\lambda$ be a real number in the interval 
 $]0,1[ $. 
If $|\varphi(\alpha\varepsilon)|\le \lambda |\tau_b(\alpha\varepsilon)|$, then 
$$
|\varphi(\beta) -x|\le \lambda|x| + \lambda e^{- \cst{kappa:minsigma} B}.
$$ 
\indent {\rm(b)} Let $\mu$ be a real number $>1$. 
 If $|\varphi(\alpha\varepsilon)|\ge \mu |\tau_b(\alpha\varepsilon)|$, then 
$$
|\varphi(\beta) +\varphi(\alpha\varepsilon)y|\le \frac{1}{\mu} |\varphi(\alpha\varepsilon)y| + e^{- \cst{kappa:minsigma} B}.
$$
\end{lemme}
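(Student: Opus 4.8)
The plan is to prove both parts by the same elementary estimate, using the basic identity $\varphi(\beta)=x-\varphi(\alpha\varepsilon)y$ together with the approximate equality $x\approx\tau_b(\alpha\varepsilon)y$ that follows from $|\tau_b(\beta)|<1/2$ and $|x|\le|y|$. First I would record the inequality $|x-\tau_b(\alpha\varepsilon)y|=|\tau_b(\beta)|<1/2$ coming from {\rm{($\ref{Equation:MajMinsigma}$)}} and {\rm{($\ref{equation:minorationAetB}$)}}, and also the bounds $|y|\ge|x|\ge 1$ and $|\tau_b(\alpha\varepsilon)y|\le 2|y|$ (the latter from Lemma $\ref{Lemme:taubalphaetsigmaabeta}$, or directly). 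The key auxiliary estimate in both parts is that, since $|\tau_b(\beta)|\le e^{-\cst{kappa:minsigma}B}$, one can replace $x$ by $\tau_b(\alpha\varepsilon)y$ (or conversely) at the cost of an error $e^{-\cst{kappa:minsigma}B}$.

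For part (a), I would start from
$$
\varphi(\beta)-x=-\varphi(\alpha\varepsilon)y=-\frac{\varphi(\alpha\varepsilon)}{\tau_b(\alpha\varepsilon)}\,\tau_b(\alpha\varepsilon)y,
$$
and use the hypothesis $|\varphi(\alpha\varepsilon)|\le\lambda|\tau_b(\alpha\varepsilon)|$ to bound $|\varphi(\beta)-x|\le\lambda|\tau_b(\alpha\varepsilon)y|$. Then write $\tau_b(\alpha\varepsilon)y=x-\tau_b(\beta)$, so $|\tau_b(\alpha\varepsilon)y|\le|x|+|\tau_b(\beta)|\le|x|+e^{-\cst{kappa:minsigma}B}$, which yields $|\varphi(\beta)-x|\le\lambda|x|+\lambda e^{-\cst{kappa:minsigma}B}$, exactly the claimed bound.

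For part (b), I would instead isolate $x$: from $\varphi(\beta)+\varphi(\alpha\varepsilon)y=x$ and $x=\tau_b(\alpha\varepsilon)y+\tau_b(\beta)$ we get
$$
\varphi(\beta)+\varphi(\alpha\varepsilon)y=\tau_b(\alpha\varepsilon)y+\tau_b(\beta)=\frac{\tau_b(\alpha\varepsilon)}{\varphi(\alpha\varepsilon)}\,\varphi(\alpha\varepsilon)y+\tau_b(\beta).
$$
Using the hypothesis $|\varphi(\alpha\varepsilon)|\ge\mu|\tau_b(\alpha\varepsilon)|$ gives $|\tau_b(\alpha\varepsilon)/\varphi(\alpha\varepsilon)|\le 1/\mu$, hence $|\varphi(\beta)+\varphi(\alpha\varepsilon)y|\le\frac{1}{\mu}|\varphi(\alpha\varepsilon)y|+|\tau_b(\beta)|\le\frac{1}{\mu}|\varphi(\alpha\varepsilon)y|+e^{-\cst{kappa:minsigma}B}$, as desired. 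There is no real obstacle here: this is a purely elementary lemma, and the only point requiring a little care is making sure the error term from replacing $x$ by $\tau_b(\alpha\varepsilon)y$ is controlled by $|\tau_b(\beta)|\le e^{-\cst{kappa:minsigma}B}$ rather than merely by the crude bound $1/2$; this is where {\rm{($\ref{Equation:MajMinsigma}$)}} is used in its sharp form.
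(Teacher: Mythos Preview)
Your proof is correct and follows essentially the same route as the paper: both parts use $\varphi(\beta)=x-\varphi(\alpha\varepsilon)y$ together with $|x-\tau_b(\alpha\varepsilon)y|=|\tau_b(\beta)|\le e^{-\cst{kappa:minsigma}B}$, applying the hypothesis on $|\varphi(\alpha\varepsilon)|/|\tau_b(\alpha\varepsilon)|$ to pass between $x$ and $\tau_b(\alpha\varepsilon)y$. The extra references you make to $|x|\le|y|$ and Lemma~\ref{Lemme:taubalphaetsigmaabeta} are not actually needed here.
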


\begin{proof}[Proof]
We have 
$ |\tau_b(\beta)|\le e^{- \cst{kappa:minsigma} B}$, 
namely
$$
|x-\tau_b(\alpha\varepsilon)y|\le e^{- \cst{kappa:minsigma} B}.
$$
We also have 
$$
\varphi(\beta)=x-\varphi(\alpha\varepsilon)y.
$$
Because of the hypothesis {\rm(a)}
we get
$$
|\varphi(\beta) -x|
=|\varphi(\alpha\varepsilon)y |\le \lambda | \tau_b(\alpha\varepsilon)y|\le \lambda |x|+ \lambda e^{- \cst{kappa:minsigma} B}.
$$
Because of the hypothesis (b), we have 
$$
|\varphi(\beta) +\varphi(\alpha\varepsilon)y|=|x|\le | \tau_b(\alpha\varepsilon)y| + e^{-\ \cst{kappa:minsigma} B}\le 
 \frac{1}{\mu} |\varphi(\alpha\varepsilon)y| + e^{-\ \cst{kappa:minsigma} B}.
 $$
 \end{proof}

\begin{lemme}\label{Lemme:varphinotsigmaa}
Let $\varphi\in\Phi$ with $\varphi\not=\sigma_a$. Then 
$$
|\varphi(\beta)|\le 2 |x|
\exp\left\{\Newcst{kappa:varphibeta} (\log m)
\log \left(2+\frac{A+B}{\log m}\right)\right\}
$$
and 
$$
|\varphi(\alpha\varepsilon)|\le
\max\left\{
\frac{3}{2} |\tau_b(\alpha\varepsilon)|\; , \; 
 8\frac{|x|}{|y|} 
 \exp\left\{ \cst{kappa:varphibeta} (\log m)
 \log \left(2+\frac{A+B}{\log m}\right)\right\}
 \right\}.
$$
\end{lemme}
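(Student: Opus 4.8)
The plan is to exploit the fact, established just above in Lemma \ref{Lemme:UniciteTau} and the discussion following it, that $T_b(\thetanu)=\{\tau_b\}$, so that every embedding $\varphi\neq\tau_b$ satisfies $|\varphi(\beta)|>|\tau_b(\beta)|^{\thetanu}$, together with the dichotomy of Lemma \ref{Lemme:varphibeta}. Fix $\varphi\in\Phi$ with $\varphi\neq\sigma_a$. First I would dispose of the trivial case $\varphi=\tau_b$: then $|\varphi(\beta)|=|\tau_b(\beta)|<1/2<|x|$ by Lemma \ref{Lemme:taubalphaetsigmaabeta}'s argument, and $|\varphi(\alpha\varepsilon)|=|\tau_b(\alpha\varepsilon)|\le\frac32|\tau_b(\alpha\varepsilon)|$, so both inequalities hold at once. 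So from now on assume $\varphi\neq\tau_b$ and $\varphi\neq\sigma_a$.

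Now apply Lemma \ref{Lemme:varphibeta} with the choice $\lambda=\frac23$ in part (a) and $\mu=\frac32$ in part (b). This gives a clean split: either $|\varphi(\alpha\varepsilon)|\le\frac32|\tau_b(\alpha\varepsilon)|$, or $|\varphi(\alpha\varepsilon)|>\frac23|\tau_b(\alpha\varepsilon)|$ — these overlap, so at least one of the two conclusions of Lemma \ref{Lemme:varphibeta} is available. In the first subcase, part (a) yields $|\varphi(\beta)-x|\le\frac23|x|+\frac23 e^{-\cst{kappa:minsigma}B}$, whence $|\varphi(\beta)|\le\frac53|x|+\frac23 e^{-\cst{kappa:minsigma}B}$; since $e^{-\cst{kappa:minsigma}B}<\frac12<|x|$ one gets $|\varphi(\beta)|\le 2|x|$, which is stronger than the first claimed bound (the exponential factor is $\ge 1$), and $|\varphi(\alpha\varepsilon)|\le\frac32|\tau_b(\alpha\varepsilon)|$ gives the second claim via the first term of the maximum. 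In the second subcase, part (b) gives $|\varphi(\beta)+\varphi(\alpha\varepsilon)y|\le\frac23|\varphi(\alpha\varepsilon)y|+e^{-\cst{kappa:minsigma}B}$, hence $|\varphi(\beta)|$ is comparable to $|\varphi(\alpha\varepsilon)y|$; more precisely $\frac13|\varphi(\alpha\varepsilon)y|\le|\varphi(\beta)|+e^{-\cst{kappa:minsigma}B}$ and $|\varphi(\beta)|\le\frac53|\varphi(\alpha\varepsilon)y|+e^{-\cst{kappa:minsigma}B}$.

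The real work is the second subcase, and this is where I expect the main obstacle: one must produce the extra exponential factor $\exp\{\cst{kappa:varphibeta}(\log m)\log(2+(A+B)/\log m)\}$, and this is exactly the point where a diophantine argument (Proposition \ref{Proposition:FormeLineaireLogarithmes}, in the packaged form of Lemma \ref{Lemme:strategieA}) enters — note that Lemma \ref{Lemme:strategieA} was explicitly flagged as being needed here. The idea is to compare $\varphi$ with $\sigma_a$: apply the formula \eqref{Equation:y} for $y$ with the pair $(\varphi_1,\varphi_2)=(\varphi,\sigma_a)$, or rather use the unit relation to write the ratio $\varphi(\alpha\varepsilon)\varphi(\beta)/(\sigma_a(\alpha\varepsilon)\,\sigma_a(\beta))$-type quantity as a product of powers of the $\epsilon_j$'s and the fixed numbers $\alpha\zeta,\rho$, so that Lemma \ref{Lemme:strategieA} bounds it from below by $\exp\{-\cst{kappa:lemme:StrategieA}(\log m)\log(2+(A+B)/\log m)\}$. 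Feeding this lower bound into an elementary inequality relating $|\varphi(\beta)|$, $|x|$ and $|\varphi(\alpha\varepsilon)y|$ — using $|\sigma_a(\alpha\varepsilon)|=\house{\alpha\varepsilon}\ge|\varphi(\alpha\varepsilon)|$ and $|x-\sigma_a(\alpha\varepsilon)y|=|\sigma_a(\beta)|$ controlled by \eqref{Equation:y} — converts the denominator gain into the claimed factor, yielding $|\varphi(\beta)|\le 2|x|\exp\{\cst{kappa:varphibeta}(\log m)\log(2+(A+B)/\log m)\}$ and then, dividing by $|y|$ and using $|\varphi(\alpha\varepsilon)y|\le 2|\varphi(\beta)|+2e^{-\cst{kappa:minsigma}B}$ from part (b), the bound $|\varphi(\alpha\varepsilon)|\le 8(|x|/|y|)\exp\{\cst{kappa:varphibeta}(\log m)\log(2+(A+B)/\log m)\}$, i.e. the second term of the maximum. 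Finally one sets $\cst{kappa:varphibeta}$ equal to $\cst{kappa:lemme:StrategieA}$ (or a small multiple thereof absorbing the numerical constants), which is legitimate since that constant depends only on $\alpha$.
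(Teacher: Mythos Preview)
Your plan has the right ingredients --- the elimination formula \eqref{Equation:y}, Lemma \ref{Lemme:strategieA}, and Lemma \ref{Lemme:varphibeta}(b) --- but the organisation is off and the key step is left too vague to count as a proof. The case split you set up for the first inequality is both unnecessary and internally inconsistent: with $\lambda=2/3$, part (a) of Lemma \ref{Lemme:varphibeta} requires $|\varphi(\alpha\varepsilon)|\le\tfrac23|\tau_b(\alpha\varepsilon)|$, not $\le\tfrac32|\tau_b(\alpha\varepsilon)|$; with $\mu=3/2$, part (b) requires $|\varphi(\alpha\varepsilon)|\ge\tfrac32|\tau_b(\alpha\varepsilon)|$, not merely $>\tfrac23|\tau_b(\alpha\varepsilon)|$. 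So your ``overlapping'' subcases do not match the hypotheses of the lemma you invoke, and the range $\tfrac23<|\varphi(\alpha\varepsilon)|/|\tau_b(\alpha\varepsilon)|<\tfrac32$ is covered by neither conclusion. The paper avoids all of this: the first inequality is obtained \emph{uniformly} for every $\varphi\neq\sigma_a$, no cases.

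The point you are missing is the precise algebraic identity. One uses the formula for $x$ (not for $y$) in \eqref{Equation:y} with $\varphi_1=\sigma_a$, $\varphi_2=\varphi$, which rearranges to
$$
\frac{\varphi(\alpha\varepsilon)\sigma_a(\beta)}{\sigma_a(\alpha\varepsilon)\varphi(\beta)}-1
=\frac{\bigl(\varphi(\alpha\varepsilon)-\sigma_a(\alpha\varepsilon)\bigr)\, x}{\sigma_a(\alpha\varepsilon)\varphi(\beta)}\cdotp
$$
The right side is nonzero and has modulus at most $2|x|/|\varphi(\beta)|$ since $|\varphi(\alpha\varepsilon)|\le|\sigma_a(\alpha\varepsilon)|$; the left side is bounded below by Lemma \ref{Lemme:strategieA} with $\varphi_1=\varphi_4=\varphi$, $\varphi_2=\varphi_3=\sigma_a$. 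Comparing gives the first inequality at once. Note that the relevant quotient is the \emph{crossed} one $\varphi(\alpha\varepsilon)\sigma_a(\beta)/\bigl(\sigma_a(\alpha\varepsilon)\varphi(\beta)\bigr)$, not the uncrossed $\varphi(\alpha\varepsilon)\varphi(\beta)/\bigl(\sigma_a(\alpha\varepsilon)\sigma_a(\beta)\bigr)$ you wrote; the latter does not drop out of \eqref{Equation:y}. Only after the first inequality is established does a dichotomy enter, and solely for the second inequality: if $|\varphi(\alpha\varepsilon)|\le\tfrac32|\tau_b(\alpha\varepsilon)|$ one is done by the first term of the max; otherwise Lemma \ref{Lemme:varphibeta}(b) with $\mu=3/2$ yields $|\varphi(\alpha\varepsilon)y|\le 3|\varphi(\beta)|+3e^{-\cst{kappa:minsigma}B}\le 4\max\{|\varphi(\beta)|,1\}$, and one inserts the first bound. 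Your separate treatment of $\varphi=\tau_b$ is also unnecessary, since the argument above already covers it.
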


\begin{proof}[Proof]
From the relation {\rm{($\ref{Equation:y}$)}} with $\varphi_1=\sigma_a$ et $\varphi_2=\varphi$ we deduce
$$
x=\frac{\varphi(\alpha\varepsilon)\sigma_a(\beta)-\sigma_a(\alpha\varepsilon)\varphi(\beta)}
{\varphi(\alpha\varepsilon)-\sigma_a(\alpha\varepsilon)},
$$
hence 
$$
\frac{\varphi(\alpha\varepsilon)\sigma_a(\beta)}{\sigma_a(\alpha\varepsilon)\varphi(\beta)}-1
=
\frac{\varphi(\alpha\varepsilon)-\sigma_a(\alpha\varepsilon)}{\sigma_a(\alpha\varepsilon)\varphi(\beta)}\cdot x.
$$
The member of the right side is nonzero, and its modulus is bounded from above by $2|x|/|\varphi(\beta)|$ since 
 $|\varphi(\alpha\varepsilon)|\le |\sigma_a(\alpha\varepsilon)|$. 
The upper bound of $|\varphi(\beta)|$ follows from Lemma $\ref{Lemme:strategieA}$ with 
 $\varphi_1=\varphi_4=\varphi$ et $\varphi_2=\varphi_3=\sigma_a$.\\

To establish the upper bound given in Lemma $\ref{Lemme:varphinotsigmaa}$ for $|\varphi(\alpha\varepsilon)|$, we may suppose
$$
|\varphi(\alpha\varepsilon)| > \frac{3}{2} |\tau_b(\alpha\varepsilon)|,
$$
otherwise the conclusion is trivial. Then we may use Lemma 
 $\ref{Lemme:varphibeta}$ 
(b) with $\mu=3/2$ to deduce
$$
|\varphi(\alpha\varepsilon)y|-|\varphi(\beta)|\le
|\varphi(\beta)+\varphi(\alpha\varepsilon)y|\le \frac{2}{3} |\varphi(\alpha\varepsilon)y|+ e^{-\ \cst{kappa:minsigma} B},
$$
hence 
$$
|\varphi(\alpha\varepsilon)y|\le 3 |\varphi(\beta)| +3e^{-\ \cst{kappa:minsigma} B}\le 4 \max \{|\varphi(\beta)|, 1 \}.
$$
We can conclude by using the upper bound of $|\varphi(\beta)|$ which we just established.
\end{proof}

 From Lemma $\ref{Lemme:varphinotsigmaa}$ we deduce the following.
 
\begin{corollaire}\label{Corollaire:varphinotsigmaa}
Assuming {\rm{($\ref{equation:minorationAetB}$)}}, we have $\Sigma_a(\thetanu)=\{\sigma_a\}$.
\end{corollaire}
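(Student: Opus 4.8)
The plan is to show that if $\varphi\in\Phi$ with $\varphi\neq\sigma_a$, then $\varphi\notin\Sigma_a(\thetanu)$, i.e.\ $|\varphi(\alpha\varepsilon)|<|\sigma_a(\alpha\varepsilon)|^{\thetanu}$. Together with $\sigma_a\in\Sigma_a(\thetanu)$, this gives the claimed equality. We already have from \S\ref{S:varphinotsigmaa} (Lemma \ref{Lemme:varphinotsigmaa}) the bound
$$
|\varphi(\alpha\varepsilon)|\le
\max\left\{
\tfrac{3}{2} |\tau_b(\alpha\varepsilon)|\; , \;
 8\tfrac{|x|}{|y|}
 \exp\left\{ \cst{kappa:varphibeta} (\log m)
 \log \left(2+\tfrac{A+B}{\log m}\right)\right\}
 \right\},
$$
so I would just estimate each of the two candidates for the maximum and check both are $<|\sigma_a(\alpha\varepsilon)|^{\thetanu}$.

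First I would handle the first term. By \eqref{Equation:MajMinsigma} we have $|\tau_b(\alpha\varepsilon)|\le e^{-\cst{kappa:minsigma}B}$ (indeed even $|\tau_b(\beta)|$ is that small; but more relevantly $|\tau_a(\alpha\varepsilon)|\le e^{-\cst{kappa:minsigma}A}$, and $|\tau_b(\alpha\varepsilon)|\le 2$ by Lemma \ref{Lemme:taubalphaetsigmaabeta}). Actually the cleanest route: $\tfrac32|\tau_b(\alpha\varepsilon)|\le 3$ by Lemma \ref{Lemme:taubalphaetsigmaabeta}, while $|\sigma_a(\alpha\varepsilon)|^{\thetanu}\ge e^{\cst{kappa:minsigma}\thetanu A}$, which by \eqref{equation:minorationAetB} is $\ge e^{\cst{kappa:minsigma}\thetanu\cst{kappa:minorA}\log m}$; taking $\cst{kappa:minorA}$ large enough (as we are allowed) this exceeds $3$, so the first term is too small.

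Next I would handle the second term. Here I use $|x|\le|y|$, so $8\tfrac{|x|}{|y|}\le 8$, leaving
$$
8\exp\left\{\cst{kappa:varphibeta}(\log m)\log\left(2+\tfrac{A+B}{\log m}\right)\right\}.
$$
Using Lemma \ref{Lemme:BestGrand} ($A\le\cst{AmajoreparB}B$) and Lemma \ref{Lemme:BmajoreParA} ($B\le\cst{kappa:BmajoreParA}A$) we get $A+B\le\kappa A$ for a suitable $\kappa$, hence $\log(2+(A+B)/\log m)\le\log(2+\kappa A/\log m)\le\kappa'\log A$ once $A\ge\cst{kappa:minorA}\log m\ge2$; and $\log m\le A/\cst{kappa:minorA}$. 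Thus the exponent is $\le\kappa''A\log A/\cst{kappa:minorA}$, which for $\cst{kappa:minorA}$ large is $<\cst{kappa:minsigma}\thetanu A/2$, and then the whole second term is $<8e^{\cst{kappa:minsigma}\thetanu A/2}<e^{\cst{kappa:minsigma}\thetanu A}\le|\sigma_a(\alpha\varepsilon)|^{\thetanu}$ (again absorbing the $8$ and the factor $1/2$ into the largeness of $A$ forced by \eqref{equation:minorationAetB}). The main point to watch is that $\cst{kappa:varphibeta}$ multiplies $(\log m)\log A$, not $A$ directly, so it is genuinely of lower order than $\cst{kappa:minsigma}\thetanu A$ — this is the only real obstacle, and it is dispatched precisely because \eqref{equation:minorationAetB} lets us make $\log m$ small relative to $A$. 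Combining the two cases, $|\varphi(\alpha\varepsilon)|<|\sigma_a(\alpha\varepsilon)|^{\thetanu}$ for every $\varphi\neq\sigma_a$, whence $\Sigma_a(\thetanu)=\{\sigma_a\}$.
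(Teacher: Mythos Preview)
Your overall strategy---use Lemma~\ref{Lemme:varphinotsigmaa} to bound $|\varphi(\alpha\varepsilon)|$ for every $\varphi\neq\sigma_a$ and compare with $|\sigma_a(\alpha\varepsilon)|^{\thetanu}$---is the right one, and your treatment of the first term $\tfrac32|\tau_b(\alpha\varepsilon)|\le 3$ is fine. But the estimate for the second term contains a genuine gap.

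You bound the exponent by
$$
\cst{kappa:varphibeta}(\log m)\log\!\left(2+\frac{A+B}{\log m}\right)
\;\le\;
\frac{\kappa''\, A\log A}{\cst{kappa:minorA}}
$$
and then assert that for $\cst{kappa:minorA}$ large this is $<\cst{kappa:minsigma}\thetanu A/2$. That would require $\log A<\cst{kappa:minorA}\cdot\cst{kappa:minsigma}\thetanu/(2\kappa'')$, i.e.\ an \emph{upper} bound on $A$ in terms of the fixed constant $\cst{kappa:minorA}$. Since $A$ depends on the solution and is not bounded a priori, this step fails. The error comes from estimating the two factors $\log m$ and $\log(2+\kappa A/\log m)$ separately: when $\log m$ is near its maximum $A/\cst{kappa:minorA}$, the logarithm is only $\log(2+\kappa\cst{kappa:minorA})$, a constant; when $\log m$ is small the logarithm can be $\sim\log A$, but then $\log m$ itself is small. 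Your bound captures the worst of both regimes simultaneously.

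The repair is to keep the product intact. Setting $t=A/\log m\ge\cst{kappa:minorA}$, one has
$$
(\log m)\log\!\left(2+\frac{\kappa A}{\log m}\right)=A\cdot\frac{\log(2+\kappa t)}{t},
$$
and $\log(2+\kappa t)/t\to 0$; so for $\cst{kappa:minorA}$ large enough this is indeed $<\cst{kappa:minsigma}\thetanu A/2$, and your argument goes through. This is exactly what the paper does, phrased as a contradiction: assuming some $\sigma\in\Sigma_a(\thetanu)$ with $\sigma\neq\sigma_a$, one first notes $|\sigma(\alpha\varepsilon)|>\tfrac32|\tau_b(\alpha\varepsilon)|$ (so only the second alternative of Lemma~\ref{Lemme:varphinotsigmaa} matters), and then the inequality $e^{\cst{kappa:minsigma}\thetanu A}\le 8\exp\{\cst{kappa:varphibeta}(\log m)\log(2+\kappa A/\log m)\}$ forces $t\le C\log(2+\kappa t)$, hence $t$ bounded, hence $A\le\kappa\log m$, contradicting~\eqref{equation:minorationAetB}.
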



 
 \begin{proof}[Proof]
 Let us remind that $|x|\le |y|$. 
Since $|\tau_b(\alpha\varepsilon)|\le 2$ (Lemma $\ref{Lemme:taubalphaetsigmaabeta}$), with $\sigma\in \Sigma_a(\thetanu)$, we have 
$$
|\sigma(\alpha\varepsilon)|>
\frac{3}{2} |\tau_b(\alpha\varepsilon)|.
$$
If there were $\sigma\in\Sigma_a(\thetanu)$ with $\sigma\not=\sigma_a$, by using Lemma $\ref{Lemme:varphinotsigmaa}$ with $\varphi=\sigma$, we would deduce $A\le \Newcst{kappa:sigmaaunique} \log m$ and thanks to {\rm{($\ref{equation:minorationAetB}$)}} we could conclude that $\sigma_a$ is the only element of 
$\Sigma_a(\thetanu)$.
 \end{proof}

\section{Proof of of the main result}
\label{S:DemThmPpal}

Let us concentrate on the 

\begin{proof}[Proof
 of Theorem $\ref{theoreme:principal}$]


For the part {\rm(a)} of Theorem $\ref{theoreme:principal}$, we take $\varepsilon\in \calE_\nu^{(\alpha)}$; by definition of $\calE_\nu^{(\alpha)}$, there exists $\varphi\in\Phi$, $\varphi\not=\sigma_a$, with 
$$
|\varphi (\alpha\varepsilon)|\ge |\sigma_a (\alpha\varepsilon)|^\nu,
$$
namely $\varphi\in\Sigma_a(\thetanu)$. Since 
$\Sigma_a(\thetanu)$ contains more than one element, 
Corollary $\ref{Corollaire:varphinotsigmaa}$ shows that the inequalities {\rm{($\ref{equation:minorationAetB}$)}} are not satisfied. This completes the proof of part {\rm(a)} of Theorem $\ref{theoreme:principal}$.\\

To prove the part {\rm(b)}, we will use 
the reciprocal polynomial of $f_{\varepsilon}$, defined by 
 $$
 Y^df_{\varepsilon}(1/Y)=a_dY^d+\cdots+a_0=a_d \prod_{i=1}^d \bigl(Y-\sigma_i(\alpha'\varepsilon')\bigr),
 $$
 with $\alpha'=\alpha^{-1}$ and $\varepsilon'=\varepsilon^{-1}$
and we will write the binary form $F_\varepsilon$ as 
 $$
F_\varepsilon (X,Y)=a_d\prod_{i=1}^d \bigl(Y-\sigma_i(\alpha'\varepsilon')X\bigr). 
$$
The part {\rm(a)} of Theorem $\ref{theoreme:principal}$ not only indicates that any solution $(x,y,\varepsilon)\in\Z^2\times\calE_\nu^{(\alpha)}$ of
the inequation $| F_\varepsilon(x,y)|\le m$ with $0<|x|\le |y|$ verifies
$$
 \max\{|y|,\; e^{\rmh(\alpha\varepsilon)}\}\le m^{\cst{kappa1lambda}(\alpha)},
$$ \\ 	
but also shows that 
any solution $(x,y,\varepsilon)\in\Z^2\times\calE_\nu^{(\alpha)}$ of the inequation $| F_\varepsilon(x,y)|\le m$ with $0<|y|\le |x|$ verifies
$$
 \max\{|x|, \; e^{\rmh(\alpha'\varepsilon')}\}\le m^{\cst{kappa1lambda}(\alpha')}.
$$ 	
Since $\rmh(\alpha'\varepsilon')=\rmh(\alpha\varepsilon)$ and since $\calEtilde_\nu^{(\alpha)}$ is the set of $\varepsilon\in\calE_\nu^{(\alpha)}$ such that $\varepsilon'\in\calE_\nu^{(\alpha')}$, it follows that each solution of the inequation $| F_\varepsilon(x,y)|\le m$ with $xy\neq 0$ verifies
$$
 \max\{|x|,\; |y|,\; e^{\rmh(\alpha\varepsilon)}\}\le m^{\cst{kappa2lambda}}.
$$
\end{proof} 

\section{Proof of Proposition $\ref{proposition:densitepositive}$}
\label{S:DemPropositionDensite}

Let us index the elements of $\Phi$ in such a way that $\sigma_1,\dots,\sigma_{r_1}$ are the real embeddings and $ \sigma_{r_1+1},\dots, \sigma_d$ are the non-real embeddings, with $\sigma_{r_1+j}=\overline{\sigma}_{r_1+r_2+j}$ ($1\le j\le r_2$). We have $d=r_1+2r_2$ and $r=r_1+r_2-1$. 
The logarithmic embedding of $K$ is the group homomorphism $\ulambda$ of $K^\times$ into $\R^{r+1}$ defined by 
$$
\ulambda(\gamma)= 
\bigr(
\delta_1 \log |\sigma_1(\gamma)|, \dots, \delta_{r+1}\log |\sigma_{r+1}(\gamma)|\bigr), 
$$
where
$$
\delta_i=
\begin{cases}
1& \hbox{for $i=1,\dots,r_1$},
\\
2& \hbox{for $i=r_1+1,\dots,r_1+r_2$}.
\end{cases}
$$ 
Its kernel is the finite subgroup $K^\times_{\tors}$ of torsion elements of $K^\times$, which are the roots of unity belonging to $K$. 
By Dirichlet's theorem, the image of $\ZKtimes$ under $\ulambda$ is a lattice of the hyperplane $\cal H$ of equation 
\begin{equation}\label{Equation:hyperplanDirichlet}
t_1+\cdots+t_{r+1}=0
\end{equation} 
in $\R^{r+1}$.
For $\borne >0$, define 
 $$
\calH (M) = \{(t_1,\dots,t_{r+1}) \in \calH \; \mid \; \max \{\delta_1^{-1}t_1, \dots, \delta_{r+1} ^{-1} t_{r+1} \} \leq \borne
\}.
$$ 
For all elements $(t_1,\dots,t_{r+1})$ of $\calH (\borne)$ we have 
$$
\max_{1\leq i \leq r+1} t_i \leq 2 \borne.
$$
Further, the inequality
$$
t_1+\cdots+t_{r+1}\le \min_{1\leq i \leq r+1} t_i+r \max_{1\leq i \leq r+1} t_i
$$
together with the equation of $\calH$ implies
$$
\max_{1\leq i \leq r+1} -t_i = - \min_{1\leq i \leq r+1} t_i
\leq 
r \max_{1\leq i \leq r+1} t_i \leq 2r \borne,
$$ 
hence this set $ \calH (M) $ is bounded: namely, for $ (t_1,\dots,t_{r+1}) \in \calH (M)$, 
$$ 
 \max_{1\leq i \leq r+1} |t_i| \leq 2r \borne.
$$
The $r$--dimension volume of 
$\calH (\borne)$
is the product of the volume of $ \calH (1)$ by $\borne^r$ 
 while the volume of $\calH (1)$ is an effectively computable positive constant, depending only upon $r_1$ and $r_2$. \\

{\it Proof of the part {\rm(a)}}.
Since $\ulambda(\ZKtimes)$ is a lattice of the hyperplane 
 $\cal H$, the limit
$$
\lim_{\borne\rightarrow\infty} \frac{1}{\borne^r} \left| \ulambda(\ZKtimes) \cap \calH(\borne) \right|
$$
exists and is a positive number. \\

The image of $\varepsilon \in \ZKtimes$ by $\ulambda$ is
$$
\ulambda( \varepsilon)=(t_1,\dots,t_{r+1})
\quad\hbox{
with 
}\quad
t_i=\delta_i\log |\sigma_i(\varepsilon)| \quad (i=1,\dots,r+1).
$$
If on the one hand $\varepsilon \in \ZKtimes(N)$, then 
$$
\delta_i^{-1}t_i
\le \log N -\log |\sigma_i(\alpha)| \quad (1\le i\le r+1);
$$
therefore
$$
\max_{1\le i\le r+1} \delta_i^{-1}t_i \le \log N + \log \house{\alpha^{-1}}\, .
$$
Consequently, if we define 
$$ 
\borne_+=\log N + \log \house{\alpha^{-1}} \ ,
$$ 
we have $\ulambda( \varepsilon)\in \calH(\borne_+)$. 
On the other hand, we have 
$$
\log | \sigma_i(\alpha\varepsilon)| =\delta_i^{-1}t_i + \log | \sigma_i(\alpha)| \le \delta_i^{-1}t_i+ \log \house{\alpha} \, .
$$
If we define 
$$
\borne_-= \log N- \log \house{\alpha}\, ,
$$
then for any $\ulambda( \varepsilon)\in \ulambda\bigl(\ZKtimes \bigr) \cap \calH(\borne_-) $ we have $\varepsilon\in \ZKtimes(N)$. Therefore,
$$
 \ulambda\bigl(\ZKtimes \bigr) \cap \calH(\borne_-) \subset \ulambda\bigl(\ZKtimes(N)\bigr)\subset
 \ulambda\bigl(\ZKtimes \bigr) \cap \calH(\borne_+).
$$
Now we can conclude that the part {\rm(a)} of 
 Proposition $\ref{proposition:densitepositive}$ 
is proved.\\

Recall that a CM field is a totally imaginary number field which is a quadratic extension of its maximal totally real subfield. 
Let us prove that for a CM field the number of elements $\varepsilon$ of $\ZKtimes(N)$ such that $\Q(\alpha\varepsilon)\not=K$ is negligible with respect to 
the number of elements $\varepsilon$ of $\ZKtimes(N)$ such that $\Q(\alpha\varepsilon)=K$. 
 Denote by $\calF^{(\alpha)}$ the complement of $\calE^{(\alpha)}$ in $\ZKtimes$: 
$$
\calF^{(\alpha)}=\{\varepsilon\in \ZKtimes\; \mid \; \Q(\alpha\varepsilon)\not=K\}.
$$
 
\begin{lemme}\label{lemme:Qalphaepsilon=K}
Assume $K$ is not a CM field. Then 
$$
\limsup_{N\rightarrow \infty} 
\frac{1}{(\log N)^{r-1}} 
\left|\ulambda
\bigl(
\calF^{(\alpha)}(N) \bigr)
\right|
<\infty.
$$
\end{lemme}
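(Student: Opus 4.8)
The plan is to cover $\calF^{(\alpha)}$ by finitely many cosets of unit groups of proper subfields of $K$, to observe that every proper subfield of a non-CM field has unit rank at most $r-1$, and then to count lattice points in $\calH(\borne)$, reusing the estimates from part {\rm(a)}.

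First I would set up the coset decomposition. For each proper subfield $L\subsetneq K$ put $S_L=\{\varepsilon\in\ZKtimes\mid\alpha\varepsilon\in L\}$. If $S_L\neq\emptyset$, fix $\varepsilon_0\in S_L$; for any $\varepsilon\in S_L$ the quotient $\varepsilon/\varepsilon_0=(\alpha\varepsilon)/(\alpha\varepsilon_0)$ lies in $L$ and is a unit of $\ZK$, hence lies in $\ZK^\times\cap L=\ZLtimes$; conversely $\varepsilon_0\ZLtimes\subseteq S_L$. Thus $S_L=\varepsilon_0\ZLtimes$ is a single coset, and $\ulambda(S_L)=\ulambda(\varepsilon_0)+\ulambda(\ZLtimes)$. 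Since $K$ has only finitely many subfields and $\Q(\alpha\varepsilon)\neq K$ precisely when $\alpha\varepsilon$ belongs to one of the (finitely many) maximal proper subfields, $\calF^{(\alpha)}$ is a finite union of such cosets $S_L$.

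Next comes the key point: for every proper $L\subsetneq K$ the rank $r_L$ of $\ZLtimes$ satisfies $r_L\le r-1$, because $K$ is not a CM field. Since $\ZLtimes\subseteq\ZKtimes$ one always has $r_L\le r$. If $r_L=r$, then $r_1(L)+r_2(L)=r_1(K)+r_2(K)$, so $L$ and $K$ have the same number of archimedean places; as every archimedean place of $L$ has at least one extension to $K$, each has exactly one. Comparing local degrees (their sum over the places above a fixed place of $L$ equals $[K:L]\ge 2$), a real place of $L$ would then have to extend to a complex place of $K$ with $[K:L]=2$, while a complex place of $L$ is impossible. Hence $L$ would be totally real, $K$ totally imaginary, and $[K:L]=2$, i.e. $K$ would be a CM field with maximal totally real subfield $L$, contrary to the hypothesis. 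Therefore $r_L\le r-1$ for every proper $L$.

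Finally, the lattice-point count. The group $\ulambda(\ZLtimes)$ is a subgroup of the lattice $\ulambda(\ZKtimes)\subseteq\calH$, free of rank $r_L\le r-1$ (the kernel of $\ulambda$ is $K^\times_{\tors}$). By the computation already carried out for part {\rm(a)}, $\varepsilon\in\ZKtimes(N)$ forces $\ulambda(\varepsilon)\in\calH(\borne_+)$ with $\borne_+=\log N+\log\house{\alpha^{-1}}$, and the excerpt shows that $\calH(\borne_+)$ is contained in the box $\{(t_1,\dots,t_{r+1}):\max_i|t_i|\le 2r\borne_+\}$, a set of diameter $O(\borne_+)$. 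Hence $\ulambda(\calF^{(\alpha)}(N))$ is contained in the union over the finitely many maximal proper subfields $L$ of the sets $(\ulambda(\varepsilon_0)+\ulambda(\ZLtimes))\cap\{\max_i|t_i|\le 2r\borne_+\}$. A translate of a lattice of rank $k\le r-1$ meets a region of diameter $D\ge 1$ in $O(D^{k})$ points, so each such intersection has $O(\borne_+^{\,r-1})=O((\log N)^{r-1})$ elements; summing over finitely many $L$ gives $|\ulambda(\calF^{(\alpha)}(N))|=O((\log N)^{r-1})$, which is exactly the stated $\limsup$ bound.

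The only step that is not routine is the rank inequality $r_L\le r-1$: the whole argument hinges on the fact that, for a non-CM field, passing to a proper subfield strictly drops the unit rank, and this is precisely what produces the exponent $r-1$ instead of $r$. The coset decomposition and the lattice-point estimate are standard and reuse the bounds already established for part {\rm(a)}.
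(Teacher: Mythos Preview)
Your proof is correct and follows essentially the same approach as the paper: finitely many proper subfields, each contributing a set of $\varepsilon$ that is (a coset of) a unit group of rank $\le r-1$, then a lattice-point count in a region of size $O(\log N)$. The paper's proof is only three lines and asserts the key rank inequality $r_L<r$ without justification; your version supplies the missing archimedean-place argument and makes the coset structure $S_L=\varepsilon_0\ZLtimes$ explicit, which the paper leaves implicit.
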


\begin{proof}[Proof] 
The set of subfields $L$ of $K$ is finite. Since $K$ is not a CM field, the rank $\varrho$ of the unit group of such a subfield $L$ strictly contained in $K$ is smaller than $r$. Therefore 
 the number of $\varepsilon\in\ZKtimes$ such that $\Q(\alpha\varepsilon)=L$ and $\lambda(\alpha)\in \calH(\borne)$ is bounded by a constant times $\borne^\varrho$. The proof of Lemma $\ref{lemme:Qalphaepsilon=K}$ is then secured.
 
\end{proof}

{\it Proof of the part {\rm(b)} of Proposition $\ref{proposition:densitepositive}$}. 
If $K$ is not a CM field, the stronger estimate 
$$
\lim _{N\rightarrow \infty} \frac{ |\calE^{(\alpha)}(N)|}{ |\ZKtimes(N)|}=1.
$$ 
 follows from the part 
{\rm(a)} and from Lemma $\ref{lemme:Qalphaepsilon=K}$.

Assume now that $K$ is CM field with maximal totally real subfield $L_0$. Since $K=\Q(\alpha)$, for any $\varepsilon\in \Z_{L_0}^\times$, we have $\Q(\alpha\varepsilon)\not=L_0$. As we have seen, for each subfield $L$ of $K$ different from $K$ and from $L_0$, the set of $\varepsilon\in\ZKtimes$ such that $\Q(\alpha\varepsilon)=L$ and $\lambda(\alpha)\in \calH(\borne)$ is bounded by a constant times $\borne^{r-1}$. The other elements $\varepsilon\in\ZKtimes$ with $\lambda(\alpha)\in \calH(\borne)$ have $\Q(\alpha\varepsilon)=K$. This completes the proof of the part {\rm(b)} of Proposition $\ref{proposition:densitepositive}$.

Before completing the proof of Proposition $\ref{proposition:densitepositive}$, one introduces a change of variables $t_i=\delta_ix_i$: we call $H$ the hyperplane of $\R^{r+1}$ of equation 
 $$
 \delta_1x_1+\cdots+\delta_{r+1}x_{r+1}=0
 $$
 and for $\borne>0$, we consider
 $$
 H(\borne)
 = \big\{
(x_1,\dots,x_{r+1})\in \calH \; \mid \; \max \{x_1,\dots,x_{r+1}\}\le \borne
\big \}.
 $$
 
{\it Proof of the part {\rm(c)}}. 

Let $\nu$ be a real number in the interval 
$]0,1[$. 
Let us take $\borne=\log N$.
Define some subsets $D_\nu(\borne)$ and $ D'_\nu(\borne)$ of $H(\borne)$ the following way:
\begin{align}\notag
&D_\nu(\borne) = \bigl\{
(x_1,\dots,x_{r+1})\in H(\borne) \; \mid \; 
\\
\notag&
\hbox{$\qquad$ 
there exists $i,j$ with $ i \not= j$ and $1\le i, j\le r_1$
 such that $x_i \ge \nu \borne$ and $x_j \ge \nu \borne$}
\bigr\}
\end{align}
and 
\begin{align}\notag
& D'_\nu(\borne) = \bigl\{
(x_1,\dots,x_{r+1})\in H(\borne) \; \mid \; 
\\&
\notag
\hbox{$\qquad$ there exists $i $ with $r_1<i\le r+1$, such that $x_i \ge \nu \borne\}
 \bigr\}. \qquad\qquad\qquad$}
\end{align}
If $D_\nu(\borne)$ is not empty, then $r_1\ge 2$ while if $D'_\nu(\borne)$ is not empty, then $r_2\ge 1$. 
We show that if $r_1\ge 2$ and $0<\nu<\delta_{r+1}/2$, then $D_\nu(1)$ has a positive volume while if $r_2\ge 1$ and $0<\nu<\delta_r/2$, then $ D'_\nu(1)$ has a positive volume.
This will show that, for a number field of degree $\ge 3$ and for $0<\nu<1/2$, at least one of the two sets $D_\nu(1)$ and $ D'_\nu(1)$ has a positive volume.\\

Assume $r_1\ge 2$, hence $\delta_1=\delta_2=1$, and $0<\nu<\delta_{r+1}/2$. Let $a$, $b$, $c$ be positive real numbers with $\nu\le a<b<\delta_{r+1}/2$, $c<1$ and $c<\delta_{r+1}-2b$. Then $D_\nu(1) $ contains the set of $(x_1,x_2,\ldots,x_{r+1})\in H$ verifying
\footnote{Notice that one does not divide by $0$: if $r=2$ the last conditions for $3\le i\le r$ disappear.}
$$
a\le x_1, x_2 \le b, \quad \frac{-c}{\delta_i(r-2)}\le x_i\le \frac{c}{\delta_i(r-2)} \quad (3\le i\le r),
$$
because these bounds and the equation $\delta_1x_1+\delta_2x_2+\cdots+\delta_{r+1}x_{r+1}=0$ of $H$, imply
$$
-1\le x_{r+1} \le 1. 
$$ 
This shows that $D_\nu(1) $ has positive volume.

Next assume $r_2\ge 1$, hence $\delta_{r+1}=2$, and $0<\nu<\delta_r/2$. Let $a$, $b$, $c$ be positive real numbers with $\nu\le a<b<\delta_r/2$ and $c<\delta_r-2b$. Then $D'_\nu(1) $ contains the set of $(x_1,x_2,\ldots,x_{r+1})\in H$ verifying
$$
a\le x_{r+1} \le b, \quad \frac{-c}{\delta_i(r-1)}\le x_i\le \frac{c}{\delta_i(r-1)} \quad (1\le i\le r-1),
$$
because these bounds, together with the equation $\delta_1x_1+\delta_2x_2+\cdots+\delta_{r+1}x_{r+1}=0$ of $H$, imply
$$
-1\le x_r \le 1. 
$$ 
This shows that $D'_\nu(1) $ has positive volume.
 
 Once we know that the 
 $r$--dimension volume of $D_\nu(1)$ (resp. $ D'_\nu(1)$) in $H$ is positive, we deduce that the $r$--dimension volume of $D_\nu(\borne)$ (resp. $ D'_\nu(\borne)$)
is bounded below by an effectively computable positive constant times $\borne^r$ --- as a matter of fact, $D_\nu(\borne)$ (resp. $ D'_\nu(\borne)$) is equal to the product of $\borne^r$ by the effectively computable constant $D_\nu(1)$ (resp. $ D'_\nu(1)$). Since $\ulambda(\alpha)+\ulambda(\ZKtimes)$ is a translate of the lattice $\ulambda(\ZKtimes)$, 
the cardinality of the set 
$$
\bigl(\ulambda(\alpha)+\ulambda(\ZKtimes)\bigr)\cap\big( D_\nu(\borne)\cup D'_\nu(\borne)\big)
$$
 is bounded below by an effectively computable positive constant times $\borne^r$.\\
 
Let $\varepsilon\in\ZKtimes$ be such that $\ulambda(\alpha\varepsilon)\in D_\nu(\borne)\cup D'_\nu(\borne)$. We have 
$$
\log \max_{1\le j\le d} |\sigma_j(\alpha\varepsilon)|\le \borne 
 $$
and there exist two distinct elements $\varphi_1,
\varphi_2$ of $\Phi$ such that 
 $$
\log|\varphi_i(\alpha\varepsilon)|\ge \nu \borne \quad (i=1,2).
$$
Consequently, 
$$
\house{\alpha\varepsilon}\le e^\borne,\quad 
|\varphi_1(\alpha\varepsilon)|\ge \house{\alpha\varepsilon}^{\, \nu}, 
\quad|\varphi_2(\alpha\varepsilon)|\ge \house{\alpha\varepsilon}^{\, \nu}
$$
and finally, since $N=e^\borne$, we conclude $\varepsilon\in \calE_{\nu}^{(\alpha)}(N)$.\\

{\it Proof of the part {\rm(d)}}. Suppose $d\ge 4$. 
For $\borne>0$, define 
\begin{align} 
\notag
\Dtilde_\nu(\borne)
=& \bigl\{
(x_1,\dots,x_{r+1})\in D_\nu(\borne) \; \mid \; (-x_1,\dots,-x_{r+1})\in D_\nu(\borne) \bigr\}, 
\\[1mm]
\notag 
D''_\nu(\borne) =& \bigl\{
(x_1,\dots,x_{r+1})\in D_\nu(\borne) \; \mid \; (-x_1,\dots,-x_{r+1})\in D'_\nu(\borne) 
\bigr\}, \\[1mm]
\notag 
 \Dtilde'_\nu(\borne) =& \bigl\{
(x_1,\dots,x_{r+1})\in D'_\nu(\borne) \; \mid \; (-x_1,\dots,-x_{r+1})\in D'_\nu(\borne) \bigr\}.
 \end{align} 
If $\Dtilde_\nu(\borne)$ is not empty, then $r_1\ge 4$.
If $D''_\nu(\borne)$ is not empty, then $r_1\ge 2$ and $r_2\ge 1$. 
If $ \Dtilde'_\nu(\borne) $ is not empty, then $r_2\ge 2$.
 
Let us show conversely that if $r_1\ge 4$, then $\Dtilde_\nu(1)$ has a positive volume, 
that if $r_1\ge 2$ and $r_2\ge 1$, then $\Dtilde'_\nu(1)$ has a positive volume and 
that if $r_2\ge 2$, then $\Dtilde'_\nu(1)$ has a positive volume. \\

Let $a$, $b$, $c$ be three positive numbers such that 
$$
\nu<a<b<1\quad \hbox{and}\quad c+2b<2a+1.
$$
For instance
$$
a=\frac{1+\nu}{2}, \quad 
b=\frac{3+\nu}{4}, \quad 
c=\frac{1+\nu}{4}\cdotp
$$ 
Assume $r_1\ge 4$, hence $\delta_1=\delta_2=\delta_3=\delta_4=1$. Then $\Dtilde_\nu(1)$ 
contains the set of $(x_1,x_2,\ldots,x_{r+1})\in H$ verifying
$$
a\le x_1,x_2 \le b, \quad
-b\le x_3,x_4 \le -a, \quad
 \frac{-c}{\delta_i(r-4)}\le x_i\le \frac{c}{\delta_i(r-4)} \quad (5\le i\le r),
$$
because these bounds, together with the equation $\delta_1x_1+\delta_2x_2+\cdots+\delta_{r+1}x_{r+1}=0$ of $H$, imply
$$
-1\le x_{r+1} \le 1. 
$$ 
This shows that $\Dtilde_\nu(1)$ has a positive volume.

Assume $r_1\ge 2$ and $r_2\ge 1$, hence $\delta_1=\delta_2=1$ and $\delta_{r+1}=2$. Then $\Dtilde_\nu(1)$ 
contains the set of $(x_1,x_2,\ldots,x_{r+1})\in H$ verifying
$$
a\le x_1,x_2 \le b, \quad
-b\le x_{r+1} \le -a, \quad
 \frac{-c}{\delta_i(r-3)}\le x_i\le \frac{c}{\delta_i(r-3)} \quad (3\le i\le r-1),
$$
because these bounds and the equation $\delta_1x_1+\delta_2x_2+\cdots+\delta_{r+1}x_{r+1}=0$ of $H$, imply
$$
-1\le x_{r} \le 1. 
$$ 
Therefore $\Dtilde'_\nu(1)$ has a positive volume.


Finally, assume $r_2\ge 2$, hence $\delta_r=\delta_{r+1}=2$. 
Then $\Dtilde'_\nu(1)$ 
contains the set of $(x_1,x_2,\ldots,x_{r+1})\in H$ verifying 
$$
a\le x_{r+1}\le b,\quad -b\le x_r\le -a,\quad
\frac{-c}{\delta_i(r-2)}\le x_i \le \frac{c}{\delta_i(r-2)} \quad (2\le i\le r-1),
$$
because these bounds, together with the equation $\delta_1x_1+\delta_2x_2+\cdots+\delta_{r+1}x_{r+1}=0$ of $H$ imply $-1\le x_1 \le 1$. Hence the $r$--dimension volume of
$\Dtilde'_\nu(1)$ is positive. 

Since $d\ge 4$, in all cases the volume of 
$\Dtilde_\nu(\borne)\cup\Dtilde'_\nu(\borne)\cup\Dtilde'_\nu(\borne)$ is bounded below by an effectively computable positive constant times $\borne^r$. The number of elements in the intersection of this set
with $\ulambda(\alpha)+\ulambda(\ZKtimes)$ is bounded below by an effectively computable positive constant times $\borne^r$.\\
 
Let $\varepsilon\in\ZKtimes$ be such that $\ulambda(\alpha\varepsilon)\in \Dtilde_\nu(\borne)\cup\Dtilde'_\nu(\borne)\cup\Dtilde'_\nu(\borne)$. We have 
$$
\log \max_{1\le j\le d} |\sigma_j(\alpha\varepsilon)|\le \borne, \quad
\log \min_{1\le j\le d} |\sigma_j(\alpha\varepsilon)|\ge -\borne 
 $$
and there exist four distinct elements $\varphi_1,
\varphi_2, \varphi_3, \varphi_4$ of $\Phi$ such that 
 $$
\log|\varphi_i(\alpha\varepsilon)|\ge \nu \borne \quad (i=1,2)
\quad\hbox{and}\quad
 \log|\varphi_j(\alpha\varepsilon)|\le -\nu \borne \quad (j=3,4).
$$
Consequently 
$$
\house{\alpha\varepsilon}\le e^\borne,\quad 
\house{\alpha}^{\, \nu}\le 
|\varphi_i(\alpha\varepsilon)|\le \house{\alpha} \quad (i=1,2) 
$$
and 
$$ 
\house{(\alpha\varepsilon)^{-1}} \le e^\borne,
\quad
\house{(\alpha\varepsilon)^{-1}}^{\,- 1}
\le |\varphi_j(\alpha\varepsilon)|\le \house{(\alpha\varepsilon)^{-1}}^{\,- \nu}
 \quad (j=2,3), 
$$
whereupon finally $\varepsilon\in \calEtilde_{\nu}^{(\alpha)}(e^\borne)$.\\

The part
{\rm(d)} of Proposition $\ref{proposition:densitepositive}$ is then proved.

\section*{Acknowledgements}
The second author thanks the ASSMS (Abdus Salam School of Mathematical Sciences) of Lahore for a fruitful stay in October 2011. 

 \vfill
 
\hbox{
\small
\vbox{
\hbox{Claude LEVESQUE}
	\hbox{D\'{e}partement de math\'{e}matiques et de statistique
	}
	\hbox{Universit\'{e} Laval
	}
	\hbox{Qu\'{e}bec (Qu\'{e}bec)
	}
	\hbox{CANADA G1V 0A6
	}
	\hbox{Claude.Levesque@mat.ulaval.ca
	}
}	
\hfill
\vbox{	\hbox{Michel WALDSCHMIDT 
	}
	\hbox{Universit\'{e} Pierre et Marie Curie (Paris 6),
	}
	\hbox{Th\'{e}orie des Nombres Case courrier 247
	}
	\hbox{4 Place Jussieu 
	}
	\hbox{F -- 75252 PARIS Cedex 05, FRANCE 
	}
	\hbox{miw@math.jussieu.fr}
}	
}	

\vfill

 \end{document}